\documentclass{amsart}

\usepackage{mathrsfs, mathtools, amssymb}

\usepackage{dsfont}

\usepackage[paper=a4paper, margin=3cm]{geometry}

\usepackage[breaklinks]{hyperref}       
\usepackage{url}            
\usepackage{booktabs}       
\usepackage{amsfonts}       
\usepackage{nicefrac}       
\usepackage{microtype}      
\usepackage{lipsum}

\usepackage{amsmath,amssymb,amsthm,amscd}
\usepackage{amsrefs}
\usepackage{tikz,tikz-cd}
\usetikzlibrary{arrows,arrows.meta,calc}

\usepackage{graphicx}
\graphicspath{}
\DeclareGraphicsExtensions{.pdf,.png,.jpg,.jpeg}

\usepackage{yhmath}
\usepackage{enumitem}
\usepackage{breakurl}
\usepackage{subcaption}

\usepackage{tikz}
\usepackage{tikz-cd}
\usetikzlibrary{arrows}
\usepackage{epsfig}
\usepackage[all]{xy}
\usepackage{epstopdf}
\usepackage{framed}

\newcommand{\Z}{\mathbb Z}
\newcommand{\Q}{\mathbb Q}

\newcommand{\lb}{\lbrace}
\newcommand{\rb}{\rbrace}
\newcommand{\la}{\langle}
\newcommand{\ra}{\rangle}
\renewcommand{\phi}{\varphi}
\newcommand{\eps}{\varepsilon}

\DeclareMathOperator{\rk}{rk}

\DeclareMathOperator{\Homs}{\mathscr{H}\text{\kern -3pt {\calligra\large om}}\,}

\DeclareMathOperator{\str}{star}

\DeclareMathOperator{\GKM}{GKM}

\DeclareMathOperator{\spn}{span}

\renewcommand{\leq}{\leqslant}
\renewcommand{\geq}{\geqslant}

\theoremstyle{plain}
\newtheorem{thm}{Theorem}[section]
\newtheorem{lm}[thm]{Lemma}

\newtheorem{pr}[thm]{Proposition}

\theoremstyle{definition}

\newtheorem{rem}[thm]{Remark}
\newtheorem{ex}[thm]{Example}
\newtheorem{defn}[thm]{Definition}

\newcounter{thmMaincounter}
\newtheorem{thmM}[thmMaincounter]{Theorem}

\tikzcdset{arrow style=tikz, diagrams={>=stealth}}

\begin{document}
	
	\title[Epimorphisms and face ring quotients]{
		Equivariant cohomology epimorphisms and face ring quotients for Hamiltonian and complexity one GKM$_4$ manifolds
	}
	
	\author{Oliver Goertsches}
	\address[O.\,Goertsches]{Philipps-Universit\"at Marburg, Germany}
	\email{goertsch@mathematik.uni-marburg.de}
	
	\author{Grigory Solomadin}
	\address[G.\,Solomadin]{Philipps-Universit\"at Marburg, Germany}
	\email{grigory.solomadin@gmail.com}
	
	\begin{abstract}
		Given a GKM$_3$ action of a torus $K$ on a manifold $M$ with GKM graph $\Gamma$, we show that for any extension of $\Gamma$ to an abstract GKM graph the corresponding restriction map in equivariant graph cohomology is surjective. While the corresponding statement for extensions of actions is well-known, we observe that this graph-theoretical statement is false in the GKM$_2$ setting. As a corollary, we obtain a description of the equivariant cohomology ring of Hamiltonian and complexity one GKM$_4$ actions in terms of generators and relations.\\
	\end{abstract}

	\keywords{Torus action, GKM theory, equivariant cohomology, face ring, Hamiltonian action, complexity one}
	
	\subjclass[2020]{Primary: 57S12, 55N91, 13F55, 06A06 Secondary: 55U10, 57R91}

	\maketitle
	
	\section{Introduction}
	
	Given a closed orientable $T$-manifold $M$ with finite and nonempty fixed point set $M^{T}$, the equivariant cohomology $H^{*}_{T}(M;\Bbbk)$ is a free $H^{*}(BT;\Bbbk)$-module if and only if $H^{odd}(M;\Bbbk)=0$~\cite[Lemma 2.1]{ma-pa-06}.
    (In this paper, cohomology is singular equivariant cohomology, with coefficients $\Bbbk$ being either the rationals or the integers.)
	This property is called \textit{equivariant formality over $\Bbbk$} of the $T$-action on $M$. 
	In particular, for any $T$-action on $M$ and any subtorus $K\subset T$, assuming that both the $T$-action and its restriction to $K$ have finite fixed point set, equivariant formality is equivalent for the $T$-action and the restricted $K$-action.
	In this case, the induced map $H^*_{T}(M;\Bbbk)\to H^*_{K}(M;\Bbbk)$ is surjective. For rational coefficients, a more general statement was proven in~\cite[Theorem 5.7]{al-fr-pu-14}, relating the syzygy order of $H^*_T(M;\Q)$ to the surjectivity statement for subtori of certain dimensions. 
	
	An equivariantly formal $T$-action on a closed, orientable manifold $M$ is called a \textit{GKM action} (and $M$ is called a \textit{GKM manifold}), if the union of the one-dimensional orbits $M_{1}$ of the $T$-action is a finite union of two-dimensional $T$-invariant spheres, i.e., if the orbit space $M_1/T$ is homeomorphic to a graph.
	This graph, together with the $H^{2}(BT;\Bbbk)/\lb\pm 1\rb$-labeling of its edges by weights of the isotropy representations of the $T$-action in the fixed points is called the \textit{(unsigned) GKM graph} $\Gamma=\Gamma(M)$ of the $T$-action.
	The \textit{equivariant graph cohomology} $H^{*}_{T}(\Gamma;\Bbbk)$ (see \eqref{eq:gkmring} below) of the GKM graph $\Gamma$ is isomorphic to $H^{*}_{T}(M;\Bbbk)$ as a $H^{*}(BT;\Bbbk)$-module for any GKM manifold $M$ by exactness of the Chang-Skjelbred sequence~\cite{ch-sk-74,gkm-98}  (assuming connected stabilizers in case of integer coefficients).	
	
	The starting point of this note is the observation that the above surjectivity statement for the restriction map in equivariant cohomology is no longer true in the context of equivariant graph cohomology of abstract GKM graphs. More precisely, given a GKM action of a torus $K$ on a manifold $M$ with the GKM graph $\Gamma_K$, and an extension of $\Gamma_K$ to an abstract GKM graph $\Gamma_T$ with $T$-labeling (see Section~\ref{subsec:eqcohomextension} below), neither the natural restriction map in equivariant graph cohomology $p_{*}\colon H^*_T(\Gamma_T;\Bbbk)\to H^*_K(\Gamma_K;\Bbbk)$ is surjective nor $H^*_T(\Gamma_T;\Bbbk)$ is a free $H^{*}(BT)$-module, in general. We provide the respective counterexample in Section~\ref{sec:GKMcounter}. 
	Recall that a GKM manifold $M$ is called \textit{GKM$_{j}$} if any $i\leq j$ pairwise distinct isotropy weights at $T_{x} M$, $x\in M^{T}$, are linearly independent in $H^{2}(BT)$ (this condition is called \emph{$j$-independency} in~\cite{ay-ma-so-23}, respectively \emph{an action in $j$-general position} in~\cite{ay-ma-23}).
	
	\begin{thmM}\label{thmM:epi}
		In the situation above, for any GKM$_{3}$ $K$-manifold $M$, the map
		\[
		\begin{tikzcd}
		H^{*}_{T}(\Gamma_{T};\Bbbk)\arrow{r}{p_{*}} & H^{*}_{K}(\Gamma_{K};\Bbbk)\cong H^*_{K}(M;\Bbbk)
		\end{tikzcd}
		\]		
		is surjective and $H^{*}_{T}(\Gamma_T;\Bbbk)$ is a free $H^*(BT;\Bbbk)$-module, for $\Bbbk=\Q$. 
		Furthermore, its kernel is equal to $\ker p\cdot H^*_T(\Gamma_T;\Bbbk)$, where $p:H^*(BT)\to H^*(BK)$ is the epimorphism given by the GKM graph extension.
		(Here, $\Bbbk=\Q$, or $\Bbbk=\Z$ provided that all stabilizers of the $K$-action on $M$ are connected.)
    \end{thmM}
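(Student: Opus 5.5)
Our plan is to show that $H^*_T(\Gamma_T)$ is free over $H^*(BT)$ and that reduction modulo $\ker p$ identifies it with $H^*_K(\Gamma_K)$. Everything will be organized around a choice of a generic direction in $H_2(BK)$, which gives a Morse-type ordering of the vertices and canonical "flow-up" classes. Because the graph extension is an abstract GKM graph with the same underlying graph and the same connection, the $T$-labels $\alpha_T(e)$ satisfy $p(\alpha_T(e))=\pm\alpha_K(e)$. Choose a generic $\xi\in H_2(BK;\Q)$ with $\alpha_K(e)(\xi)\neq 0$ for all edges $e$. This orients every edge and defines an index function $\lambda(v)=\#\{\text{edges at } v \text{ with } \alpha_K(e)(\xi)<0\}$. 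For a GKM manifold, equivariant formality (vanishing odd cohomology) forces $\xi$ to define a perfect "Morse function" on $\Gamma_K$: the Betti numbers of $M$ are counted by the indices. The flow-up classes $\tau^K_v\in H^{2\lambda(v)}_K(\Gamma_K)$ then form an $H^*(BK)$-basis. This step uses the GKM$_3$ condition, as in the Guillemin–Zara theory of ``Morse functions'' on GKM graphs for 3-independent graphs, and the integral refinement uses connected stabilizers.

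The main step is to construct, for each vertex $v$, a $T$-flow-up class $\tau^T_v\in H^{2\lambda(v)}_T(\Gamma_T)$ supported on the flow-up of $v$. We require $\tau^T_v(v)=\prod_{e\text{ down at }v}\alpha_T(e)$ and $p_*\tau^T_v=\tau^K_v$. We proceed vertex by vertex upward in the order given by $\xi$, at each vertex $w$ solving for $\tau^T_v(w)$ the GKM congruences with the already-defined lower neighbours. The obstacle is the existence of a solution. The required value must be congruent to prescribed polynomials modulo $\alpha_T(e)$ for each downward edge $e$ at $w$. To make the system consistent, we use the GKM$_3$ property of the $K$-graph, which passes to $T$ since $p$ maps $T$-weights to $K$-weights: any three weights at $w$ are pairwise independent. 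Pairwise coprimality of the downward $T$-weights then gives a Chinese remainder argument. The compatibility condition for pairs is checked on the 2-dimensional faces (connection paths) through $w$, by the standard argument of Guillemin–Zara and of ``GKM$_3$ implies 2-faces'' arguments. I expect this consistency check, especially over $\Z$ (where we need $\alpha_T(e)$ primitive, using connected stabilizers), to be the hardest part. In the GKM$_2$ counterexample of Section~\ref{sec:GKMcounter} it fails precisely here. To ensure $p_*\tau^T_v=\tau^K_v$, we note that $\tau^K_v$ is determined by its degree and support up to lower-order flow-ups. We then correct $\tau^T_v$ by adding $H^*(BT)$-combinations of already-constructed lower $\tau^T_u$, lifting coefficients through the surjection $p$.

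Given the classes $\tau^T_v$, the standard upper-triangularity argument, which evaluates at the minimal vertex of the support, shows that they form an $H^*(BT)$-basis of $H^*_T(\Gamma_T)$. Hence $H^*_T(\Gamma_T)$ is free, and $p_*$ is surjective because it maps a basis onto a basis. Finally, let $x=\sum f_v\tau^T_v$ lie in $\ker p_*$. Then $\sum p(f_v)\tau^K_v=0$, and freeness of the $\tau^K_v$ forces $p(f_v)=0$ for all $v$. So $x\in\ker p\cdot H^*_T(\Gamma_T)$; the reverse inclusion is obvious.
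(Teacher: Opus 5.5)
Your argument has a genuine gap at its first step. Everything rests on a Morse-type polarization of $\Gamma_K$ by a generic $\xi\in H_2(BK;\Q)$, and the hypotheses of the theorem do not supply one.

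The theorem concerns arbitrary GKM$_3$ manifolds, whose graphs are in general only unsigned, with $\alpha_K(e)\in H^2(BK;\Z)/\{\pm 1\}$. For such graphs the sign of $\alpha_K(e)(\xi)$ is not defined, and with it neither is your edge orientation nor the index $\lambda(v)$.

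Even when an invariant almost complex structure provides signs, equivariant formality alone does not make $\xi$ a perfect Morse function. Take the GKM $T^2$-action on $S^6=G_2/SU(3)$ with its $G_2$-invariant almost complex structure. The three weights at a fixed point sum to zero, so every generic $\xi$ produces indices $1$ and $2$, whereas $b_2(S^6)=b_4(S^6)=0$. You give no argument for why GKM$_3$ would repair this. The Guillemin--Zara results you cite take as input a polarization without oriented cycles, and for manifolds this input is essentially a moment map.

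The same problem recurs in your Chinese-remainder step. You check $\tau(u_i)\equiv\tau(u_j)$ modulo $(\alpha_T(e_i),\alpha_T(e_j))$ by going around the $2$-face through the two descending edges $e_i,e_j$ at $w$. That only works if every other vertex of that face has already been treated, i.e.\ if $w$ is the top vertex of the face. This is the convexity of moment polygons of Hamiltonian $2$-faces, not a consequence of GKM$_3$. So at best your proposal covers Hamiltonian actions over $\Q$; the integral case is also only sketched.

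What is missing is a way to feed equivariant formality of the $K$-action into the combinatorics of $\Gamma_T$ without any Morse function. The paper does this with exactness of the topological ABFP sequence of $M$ (Franz--Puppe, Theorem~\ref{thm:ABFPexact}), which holds for every equivariantly formal action. GKM$_3$ is used only to identify the part of that sequence up to $\overline{AB}^2$ with $\overline{AB}^*(\Gamma_K)$.

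The paper's argument then runs as follows:
\begin{enumerate}
\item Reduce to a codimension one extension with $\ker p=(x)$. The columns $0\to\overline{AB}^i(\Gamma_T)\xrightarrow{\cdot x}\overline{AB}^i(\Gamma_T)\to\overline{AB}^i(\Gamma_K)\to 0$ are exact.
\item A diagram chase by induction on degree gives exactness of the $T$-rows at $\overline{AB}^1$.
\item A second chase gives surjectivity of $p_*$, hence a short exact sequence $0\to H^*_T(\Gamma_T)\xrightarrow{\cdot x}H^*_T(\Gamma_T)\xrightarrow{p_*}H^*_K(\Gamma_K)\to 0$.
\item Freeness and $\ker p_*=\ker p\cdot H^*_T(\Gamma_T)$ follow by lifting an $H^*(BK)$-basis.
\item For a general extension, iterate along a flag of codimension one subtori.
\end{enumerate}
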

	
	The proof of Theorem~\ref{thmM:epi} follows by chasing through a diagram consisting of the ABFP-type sequences for abstract GKM graphs that is similar to the complex $\mathcal{B^{*}(P)}$ introduced in~\cite{fr-16'}.
	Theorem~\ref{thmM:epi} and the results of~\cite{go-so-25} about extensions of GKM graphs to \textit{torus graphs} (i.e. $T$-labeled GKM graphs of valence $\dim T$) imply the second main result of this article.
    (For the definition of the face ring $\Bbbk[S(\Gamma)]$ and the poset $S(\Gamma)$ see~\S\ref{sec:thm2pf} and~\S\ref{ssec:face} below, respectively.)
	
	\begin{thmM}\label{thmM:quot}
		In the situation of Theorem~\ref{thmM:epi}, assume that $M$ is a GKM$_{4}$ $K$-manifold $M$ with the GKM graph $\Gamma=\Gamma_{K}$ that is either Hamiltonian, or has \textit{complexity} $1/2\cdot\dim M-\dim K=1$.
		Then there is an isomorphism of rings 
		\[
		H^{*}_{K}(M;\Bbbk)\cong 
		\Bbbk[S(\Gamma)]/I,
		\]
		for some ideal $I$ generated by degree $2$ elements.
	\end{thmM}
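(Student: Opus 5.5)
The plan is to reduce Theorem~\ref{thmM:quot} to Theorem~\ref{thmM:epi} by extending $\Gamma=\Gamma_K$ to a \emph{torus graph} $\Gamma_T$, and then to use the known face ring description of the equivariant cohomology of torus graphs. First we invoke the results of~\cite{go-so-25}. In the Hamiltonian case and in the complexity one case, under the GKM$_4$ assumption (for complexity one, GKM$_3$ may already suffice), these show that $\Gamma$ admits an extension to an abstract GKM graph $\Gamma_T$ with $\dim T$ equal to the valence $n=\tfrac12\dim M$. In other words, $\Gamma_T$ is a torus graph with the same underlying graph and the same connection, and its labels map to those of $\Gamma$ under an epimorphism $p\colon H^*(BT)\to H^*(BK)$. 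Concretely, in the complexity one case we would add one extra coordinate to the labels. In the Hamiltonian case we would use the moment polytope structure to produce the $n$-valent labeling. Here the GKM$_4$ condition is what makes the two-dimensional and three-dimensional faces of $\Gamma$ behave well enough for the extension to exist.

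Second, we identify $H^*_T(\Gamma_T;\Bbbk)$ with the face ring. For a torus graph, the faces (connected subgraphs closed under the connection and spanned by subsets of edges at a vertex) form the poset $S(\Gamma)$. The Maeda--Masuda--Panov type theorem gives a ring isomorphism $H^*_T(\Gamma_T;\Bbbk)\cong \Bbbk[S(\Gamma)]$: each face $F$ is sent to its Thom class $\tau_F$, which is the product of the labels of edges transverse to $F$ at vertices of $F$ and zero elsewhere. We must check that $S(\Gamma_T)=S(\Gamma)$, which holds because the faces depend only on the graph and the connection. We also need the freeness hypothesis of that theorem (face ring Cohen--Macaulay / $H^*_T$ free), and Theorem~\ref{thmM:epi} supplies it. For $\Bbbk=\Z$ we use the connected stabilizer assumption, so that the labels at each vertex form a basis, as required for the integral version.

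Third, we apply Theorem~\ref{thmM:epi}. The map $p_*\colon H^*_T(\Gamma_T)\to H^*_K(\Gamma_K)\cong H^*_K(M)$ is a surjective ring homomorphism, and its kernel is $\ker p\cdot H^*_T(\Gamma_T)$. The ideal $\ker p\subset H^*(BT)$ is generated by linear forms, i.e.\ by degree $2$ elements. Under the isomorphism with $\Bbbk[S(\Gamma)]$, a linear form $u\in H^2(BT)$ maps to $\sum_{F} \langle u,\cdot\rangle \tau_F$ summed over codimension one faces, which is again a degree $2$ element. Hence $I$ is the ideal generated by the images of a basis of $\ker p$, and so it is generated in degree $2$. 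This yields $H^*_K(M;\Bbbk)\cong\Bbbk[S(\Gamma)]/I$.

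I expect the main obstacle to be the first step: guaranteeing that the torus graph extension exists in both cases, with integral labels whenever $\Bbbk=\Z$. This is where we must rely carefully on the precise hypotheses of~\cite{go-so-25} and on the GKM$_4$ condition. A secondary point is checking that the face ring isomorphism for torus graphs holds without extra assumptions such as connected faces or the presence of vertices, given freeness.
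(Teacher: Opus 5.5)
Your route matches the paper's. You extend $\Gamma_K$ to a torus graph $\Gamma_T$ via \cite{go-so-25}, identify $H^*_T(\Gamma_T;\Bbbk)$ with the face ring, and use Theorem~\ref{thmM:epi} to pass to the quotient by $\ker p\cdot H^*_T(\Gamma_T;\Bbbk)$, with $S(\Gamma_T)=S(\Gamma_K)$ because the connection is shared.

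\textbf{The gap is in your second step.} The extension supplied by \cite{go-so-25} is in general only an \emph{unsigned} torus graph; already $\Gamma_K$ is unsigned, since $M$ need not be almost complex. Its labels lie in $H^2(BT;\Z)/\{\pm1\}$. For any choice of lifts, the congruence only reads $\tilde\alpha(\nabla_e e')\in\eps\,\tilde\alpha(e')+\Z\alpha(e)$, with an uncontrolled sign $\eps$. The Maeda--Masuda--Panov isomorphism (Theorem~\ref{thm:mp}) is a statement about $T$-graphs, where $\beta(\nabla_e e')=\beta(e')+d_{e,e'}\beta(e)$ holds exactly. Without such a structure, your Thom class $\tau_F$ (``the product of the labels of edges transverse to $F$'') is only defined up to a sign at each vertex. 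For $\tau_F$ to lie in $H^*_T(\Gamma_T)$, these signs must be chosen compatibly along every edge of $F$, hence consistently around every cycle of $F$. This is not automatic, and it is a different issue from the ``connected faces / presence of vertices'' point you list. So your step ``MMP gives $H^*_T(\Gamma_T;\Bbbk)\cong\Bbbk[S(\Gamma)]$'' is not justified as stated.

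The missing ingredient is Proposition~\ref{pr:lifttor}: every unsigned torus graph lifts to a $T$-graph. The argument is short.
\begin{enumerate}
\item Choose arbitrary lifts $\tilde\alpha(e)\in\Z^n$, and at each vertex $v$ take the dual basis $\{\lambda_v(e)\mid e\in\str v\}$.
\item The unsigned congruence relation shows that if $F$ is a facet through $v$ with transverse edge $e$, then $\lambda_v(e)$ is independent of $v\in F$ up to sign.
\item Fixing one sign per facet gives a unimodular function on facets.
\item Dualizing back (Proposition~\ref{pr:torchar}) yields a $T$-graph $\widetilde\Gamma_T$ whose axial function reduces to $\alpha_T$ modulo signs.
\end{enumerate}
Since equivariant graph cohomology only sees the labels up to sign, you then get $\Bbbk[S(\Gamma)]\cong H^*_T(\widetilde\Gamma_T;\Bbbk)=H^*_T(\Gamma_T;\Bbbk)$, and the rest of your argument goes through as written.

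A minor point: the face ring isomorphism for $T$-graphs holds unconditionally. You do not need freeness from Theorem~\ref{thmM:epi} at that step; Theorem~\ref{thmM:epi} is needed only for surjectivity of $p_*$ and the description of its kernel.
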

	More precisely, the ideal $I$ is the image of $\ker p\cdot H^{*}_{T}(\Gamma_{T};\Bbbk)$ under the isomorphism with the face ring, where $\Gamma_T$ is a GKM graph extension of $\Gamma_{K}$ to a torus graph (i.e. $\dim T=1/2\cdot\dim M$) and $p\colon H^{*}(BT)\to H^{*}(BK)$ the corresponding epimorphism.
	Theorem~\ref{thmM:quot} is a generalization of~\cite[Theorem~2]{ay-ma-so-23}. Note that Theorem~\ref{thmM:epi} fills a gap in the argument of~\cite[Theorem~2]{ay-ma-so-23}, as the surjectivity of the restriction map (denoted there also by $p_*$) was used in~\cite{ay-ma-so-23} without proof.
	In order to use the well-known proof of the isomorphism with the face ring as in~\cite{ma-pa-06,ma-ma-pa-07} in the case of unsigned torus graphs, we show that they always admit a lift to a $T$-graph (see Proposition~\ref{pr:lifttor}).\\
	
	\noindent {\bf Acknowledgements:} 
	O.G. thanks Leopold Zoller for discussions on the realization problem for equivariant graph cohomology.
	G.S. thanks Matthias Franz for the discussion of syzygies. We gratefully acknowledge funding of the Deutsche Forschungsgemeinschaft
	(DFG, German Research Foundation): Project numbers 561158824 (Walter Benjamin Fellowship of G.S.) and 452427095 (O.G).
	
	\section{Preliminaries}
	
	In this section, we provide the reader with the notions from GKM theory that are necessary for the proof of the main results of this note.
	The contents of this section are well-known.
	
	\subsection{GKM manifolds and GKM graphs}
	
	In this note $(V,E)$ denotes an abstract connected $n$-valent graph without loops.
	For any (oriented) edge $e\in E$ the initial and terminal vertices of $e$ are denoted by $i(e),t(e)\in V$, respectively.
	For any edge $e\in E$ denote by $\overline{e}\in E$ the edge with opposite orientation.
	
	\begin{defn}[\cite{gu-za-01}]\label{defn:abstractsignedgraph}
		An \emph{(abstract) unsigned GKM graph} $\Gamma$ is the tuple of 
		\begin{enumerate}[label=(\roman*)]
			\item an abstract graph $(V,E)$;
			\item a \textit{connection} $\nabla$ on $(V,E)$, i.e. a collection $\lb\nabla_{e}\rb_{e\in E}$ of bijective maps $\nabla_{e}$ from incident edges to $i(e)$ to incident edges to $t(e)$ satisfying $\nabla_{e}e=\overline{e}$ and $\nabla_{\overline{e}}^{-1}=\nabla_{e}$;
			\item an unsigned \emph{axial function} $\alpha:E\to \Z^k/\lb \pm 1\rb$, i.e. a map satisfying $\alpha(\overline{e}) = \alpha(e)$ as well as that for any choice of lifts $\tilde{\alpha}(e')\in \Z^k$ of $\alpha(e')$ and $\tilde{\alpha}(\nabla_e e')$ of $\alpha(\nabla_e e')$ there exists $\eps\in \{\pm 1\}$ such that
			\begin{equation}\label{eq:gkmcomp1}
				\tilde{\alpha}(\nabla_{e}e') \in \eps\tilde{\alpha}(e') + \Z \alpha(e).
			\end{equation}
			for all $e,e'\in E$ with a common origin.
		\end{enumerate}
		Additionally, the $\Z$-span of the image of $\alpha$ is required to be all of $\Z^k/\lb \pm 1\rb$ (this condition is sometimes called \textit{effectivity} of the GKM graph). 
	\end{defn}
	
	One obtains the notion of a \textit{signed GKM graph} by demanding that the axial function $\alpha$ takes values in $\Z^{k}$ in Definition~\ref{defn:abstractsignedgraph}, satisfies the conditions
	\begin{equation}\label{eq:gkmcomp2}
		\alpha(\overline{e}) = -\alpha(e),\ 
		\alpha(\nabla_{e}e') \in \alpha(e') + \Z \alpha(e),
	\end{equation}
	and has span $\Z^{k}$. If $k=n$ in the definition of a GKM graph then we obtain a so-called \textit{(signed or unsigned, respectively) torus graph}.
	
	A (signed or unsigned) GKM graph $\Gamma$ is called a \textit{GKM$_{j}$ graph} if any $i\leq j$ pairwise distinct elements of the multiset $\lb \alpha(e)\mid e\in \str v\rb$, where
    \[
    \str v:=\lb e\in E\mid i(e)=v\rb, 
    \]
    are linearly independent for any $v\in V$. Note that linear independence is a sensible concept for elements in $\Z^k/\{\pm 1\}$.
	
	\begin{rem}
		Oftentimes, one does not include the connection $\nabla$ in the definition of an abstract GKM graph; instead, one assumes only the existence of a connection compatible with the axial function, i.e., satisfying \eqref{eq:gkmcomp1} (respectively \eqref{eq:gkmcomp2}). In this note we focus mainly on studying GKM$_q$ graphs for $q\geq 3$. In this situation the compatible connection is unique.
	\end{rem}
	
	\begin{defn}[\cite{gu-za-01},~\cite{go-ko-zo-22}]\label{defn:gkm}
		Let $M^{2n}$ be a smooth, closed, connected, orientable manifold with a smooth effective action of the compact torus $T^k=(S^1)^k$. 
		The $T^{k}$-action on $M^{2n}$ is called a \textit{GKM manifold} over $\Bbbk$ (recall that $\Bbbk$ is either $\Z$ or $\Q$) if the fixed point set $M^{T^k}$ of the action is finite and non-empty, $M_{1}$ is a finite union of two-dimensional spheres, where 
		\[
		M_{i}:=\{p\in M\mid \dim T\cdot p\leq i\}\subseteq M,\ 
		i\leq k,
		\]
		denotes the \textit{equivariant i-skeleton of $M$}, and $H^{odd}(M;\Bbbk)=0$ holds in singular cohomology.
		In case $n=k$, one obtains a \textit{torus manifold $M$ (e.g. see~\cite{ma-pa-06})} by omitting $H^{odd}(M;\Bbbk)=0$ in this definition.
	\end{defn}
	
	For any GKM $T$-manifold $M$ the orbit space of $M_{1}$ is homeomorphic to a graph $(V,E)$ with vertex set $V=M^{T}$.
	The graph $(V,E)$, equipped with the axial function given by the weights of the isotropy representations in the $T$-fixed points is an unsigned GKM graph $\Gamma=\Gamma(M)$, after identifying the integer lattice ${\mathfrak{t}}_{\Z}^*$ in the dual ${\mathfrak{t}}^*$ of the Lie algebra ${\mathfrak{t}}$ of the torus $T^{k}$ with $\Z^k$. 
	In the following we will identify ${\mathfrak{t}}_{\Z}^*$ with $H^2(BT;\Z)$, and even speak about \emph{abstract} signed (unsigned, respectively) GKM graphs with $T$-labeling, meaning that the axial function takes values in $H^2(BT;\Z)$ (respectively $H^2(BT;\Z)/\{\pm 1\}$). This will be often reflected in the notation by denoting a GKM graph with $T$-labeling as $\Gamma_T$.
	Furthermore, if $M^{2n}$ has a $T$-invariant almost complex structure, then $\Gamma$ lifts to a signed GKM graph.
	For details, in particular an argument for the existence of a compatible connection, see~\cite{gu-za-01} or~\cite[Prop.\ 2.3]{go-wi-22}.
	
	\subsection{Graph equivariant cohomology and extensions} \label{subsec:eqcohomextension}
	
	Let $\Gamma$ be an abstract GKM graph with $T$-labeling. The \textit{equivariant graph cohomology} of $\Gamma$ with coefficients in $\Bbbk$ is the $H^{*}(BT;\Bbbk)\cong\Bbbk[t_{1},\dots,t_{k}]$-algebra
	\begin{equation}\label{eq:gkmring}
		H^{*}_{T}(\Gamma;\Bbbk):=\lb f\colon V\to H^{*}(BT;\Bbbk)\mid f(t(e))-f(i(e))\in (\alpha(e)),\ e\in E\rb,
	\end{equation}
	where $(V,E)$ is the underlying graph of $\Gamma$ and $\alpha$ is its axial function~\cite{gu-za-01}.
	
	\begin{defn}[{\cite{ku-19}}]
		Consider two tori $K\subset T$. By definition, a GKM graph $\Gamma_{T}$ with $T$-labeling \textit{extends} a GKM graph $\Gamma_{K}$ with $K$-labeling if both share the same underlying graphs and connections, whereas the corresponding axial functions $\alpha_T$ and $\alpha_K$ satisfy 
		\begin{equation}\label{eq:extension}
			p\circ \alpha_T=\alpha_K
		\end{equation}
		with $p$ being the induced group epimorphism $p\colon H^{2}(BT;\Z)\to H^{2}(BK;\Z)$ by the embedding $K\subset T$ for signed GKM graphs $\Gamma_{T},\Gamma_{K}$ (respectively its induced map $H^2(BT;\Z)/\{\pm 1\}\to H^2(BK;\Z)/\{\pm 1\}$ for unsigned GKM graphs $\Gamma_{T},\Gamma_{K}$).
	\end{defn}
	
	Such an extension induces a morphism of algebras 
	\[
	p_{*}\colon H^{*}(\Gamma_{T};\Bbbk)\to H^{*}(\Gamma_{K};\Bbbk).
	\]
	
	\subsection{Faces and orbit spaces of GKM manifolds}\label{ssec:face}
	
	For any GKM $T$-manifold $M$ over $\Bbbk$ with $\dim T=k$ denote the natural projection to the orbit space by $\pi\colon M\to Q:=M/T$. 
	Let $Q_{i}:=\pi(M_{i})$, $i\leq k$.
	A \textit{face of the orbit space} is the closure $F$ of any connected component of $Q_{i}\setminus Q_{i-1}$.
	Let $\rk F:=i$ be the \textit{rank} of $F$.
	
	The preimages $M_F:=\pi^{-1}(F)$ of the faces of $Q$ of rank $i$ are exactly the closures of the sets $M_i\setminus M_{i-1}$. These coincide with the connected components $N$ of the fixed point submanifolds $M^H:=\lb x\in M\mid hx=x\ \forall h\in H\rb$, where $H\subset T$ runs over the connected components of the isotropy groups of the $T$-action of codimension $i$. Any such $N$ is $T$-invariant. 
	We call a submanifold $M_F$ a \emph{face submanifold} of $M$.
    Let $T_{F}:=\lb t\in T\mid tx=x\ \forall x\in M_{F} \rb$ be the isotropy subgroup of the $T$-action on $M_{F}$.
	
	\begin{lm}[{\cite[Lemma~2.2]{ma-pa-06}}]\label{lm:efface}
		Any connected component $M_{F}$ as above satisfies $H^{odd}(M_{F};\Bbbk)=0$. The $T/T_{F}$-action on $M_{F}$ is again of GKM type, provided that $\rk F>1$.
	\end{lm}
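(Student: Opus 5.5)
Lemma~\ref{lm:efface} makes two claims: $M_F$ has no odd cohomology, and the induced $T/T_F$-action on $M_F$ is of GKM type. I will prove them in that order, using the description of $M_F$ as a connected component $N$ of a fixed point set $M^H$, where $H$ is a subtorus of $T$ (the identity component of an isotropy group).

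\emph{Vanishing of odd cohomology.} I would use the Borel localization and the Chang--Skjelbred/Hsiang rank argument. By hypothesis $H^{odd}(M;\Bbbk)=0$, so the $T$-action, and hence the restricted $H$-action, is equivariantly formal. For $\Bbbk=\Q$, Hsiang's theorem gives
\[
\dim H^*(M^H;\Q)=\dim H^*(M;\Q).
\]
It also gives $\sum_N \dim H^{odd}(N)\le \dim H^{odd}(M)=0$. The standard way to get this is through the inequality $\dim H^*(M^H)\le\dim H^*(M)$, with equality exactly when the action is formal, applied separately to even and odd degrees. Alternatively, one can use that $M^T=(M^H)^{T}$ is finite and that the Euler characteristic satisfies $\chi(M^H)=\chi(M^T)=|M^T|=\dim H^*(M)=\dim H^*(M^H)$. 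From
\[
\chi(N)=\dim H^{ev}(N)-\dim H^{odd}(N)\quad\text{and}\quad \sum_N\dim H^*(N)=\sum_N\chi(N),
\]
it follows that $H^{odd}(N)=0$ for every component $N$.

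For $\Bbbk=\Z$ (with connected stabilizers), I would first apply the rational statement, and then handle torsion via the same argument with $\mathbb{F}_p$-coefficients for a suitable $p$-subtorus, as in~\cite{ma-pa-06}. The finiteness hypotheses make Smith theory applicable, and this torsion step is the main technical obstacle. I would follow \cite[Lemma~2.2]{ma-pa-06}, which handles exactly this case, rather than reprove it.

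\emph{GKM property.} With odd cohomology vanishing, the $T/T_F$-action on $M_F$ is equivariantly formal. It is effective by the definition of $T_F$. Its fixed point set is $M_F\cap M^T$, which is finite, and it is nonempty because $\chi(M_F)>0$. Its one-skeleton is $M_F\cap M_1$: a point of $M_F$ has a one-dimensional $T/T_F$-orbit exactly when it has a one-dimensional $T$-orbit. Hence this one-skeleton is a union of some of the finitely many invariant $2$-spheres of $M$, namely those contained in $M_F$. Finally, $M_F$ is closed, connected and orientable, being a component of the fixed set of a torus acting on an orientable manifold; orientability comes from the complex structure on the normal bundle given by the torus weights. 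Together these are exactly the conditions in Definition~\ref{defn:gkm}.

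The assumption $\rk F>1$ is needed only so that the GKM definition is meaningful. For $\rk F\le 1$, $M_F$ is a fixed point or a single sphere, with $\dim T/T_F\le1$.
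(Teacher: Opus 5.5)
Your proposal is correct and is essentially the argument behind the paper's citation of \cite[Lemma~2.2]{ma-pa-06}, which is all the paper gives. You use localization together with $\chi(M^H)=\chi(M^T)=\dim H^*(M)$ for the odd vanishing, defer the integral torsion step to that same source as the paper does, and then check the GKM axioms for $M_F$ directly. If you write out the integral step, take as Smith-theory group the finite $p$-group of $p^k$-torsion elements of $H$ with $k$ large, so that its fixed set is exactly $M^H$; the elements of order $p$ alone can fix strictly more. You also need $H^{odd}(M;\mathbb{F}_p)=0$, which follows from $H^{odd}(M;\Z)=0$ via Poincar\'e duality and universal coefficients.
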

	
	A connected subgraph $\Theta$ of a GKM graph $\Gamma$ is called an \textit{$i$-face of $\Gamma$} if $\Theta$ is invariant with respect to the connection of $\Gamma$ along edges of $\Theta$ and has valence $i$~\cite{gu-za-01},~\cite{ay-ma-so-23}.
	A face of valence $n-1$ is called a \textit{facet} of $\Gamma$.
	
	The collection $S(M)$ of face submanifolds in $M$ has a partial order given by inclusion of submanifolds.
	The poset $S(M)$ has the rank function given by $1/2\cdot\dim M_F$.
	The collection $S(\Gamma)$ of face subgraphs in $\Gamma$ has a partial order given by inclusion of graphs.
	The poset $S(\Gamma)$ has the rank function given by graph valence.
	Let $S_{i}(\Gamma)$, $S_{i}(M)$ be the subposets of all rank $\leq i$ elements in $S(\Gamma)$, $S(M)$, respectively.
	
	\begin{lm}[{\cite[Proposition~3.9]{so-23}}]\label{lm:gmpos}
		Let $\Gamma$ be the GKM graph of a $\GKM_{j}$-manifold $M$.
		Then $S_{i}(M)=S_{i}(\Gamma)$ for $i<j$.
	\end{lm}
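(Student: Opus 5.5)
The statement to address is Lemma~\ref{lm:gmpos}: for a GKM$_j$-manifold $M$ with GKM graph $\Gamma$, we have $S_i(M)=S_i(\Gamma)$ for $i<j$. The plan is to construct a rank-preserving, order-preserving bijection $M_F\mapsto \Gamma_F$. Here $\Gamma_F$ is the subgraph whose vertices are $M_F^T$ and whose edges are the invariant $2$-spheres contained in $M_F$.

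\textbf{Faces of $M$ give faces of $\Gamma$.} By Lemma~\ref{lm:efface}, $M_F$ is a GKM manifold for $T/T_F$, so $M_F^T\neq\emptyset$. Take $x\in M_F^T$. Then $T_xM_F\subset T_xM$ is the sum of the weight spaces on which the identity component $H$ of $T_F$ acts trivially. Hence the edges of $\Gamma_F$ at $x$ are exactly those edges of $\Gamma$ whose weights vanish on $\mathfrak h$. There are $\rk F$ of them, so $\Gamma_F$ has valence $\rk F$ at every vertex, and it is connected because $M_F$ is connected and is itself GKM.

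Connection invariance follows from the compatibility condition \eqref{eq:gkmcomp1}. If $e,e'\in\Gamma_F$, then $\alpha(\nabla_e e')\in\pm\alpha(e')+\Z\alpha(e)$, and the right-hand side vanishes on $\mathfrak h$, so $\nabla_e e'$ lies in $\Gamma_F$. This uses that $\nabla$ is the connection coming from $M$, which is unique for $j\ge3$. Thus $\Gamma_F$ is an $i$-face. The assignment is injective because the face submanifold through $x$ containing a given set of tangent weight spaces is determined as the component of $M^H$ through $x$. It clearly preserves inclusion.

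\textbf{Faces of $\Gamma$ give faces of $M$.} Let $\Theta$ be an $i$-face with $i<j$, and pick a vertex $x$ with edges $e_1,\dots,e_i$ in $\Theta$ and weights $\alpha_1,\dots,\alpha_i$. Set $H=\bigl(\bigcap\ker\alpha_k\bigr)^0$, a subtorus of codimension $i$ since the $\alpha_k$ are linearly independent (here $i\le j$ is used). Let $N$ be the component of $M^H$ through $x$. The key point is that $T_xN$ is spanned exactly by the weight spaces of $\alpha_1,\dots,\alpha_i$. Any further weight $\beta$ at $x$ vanishing on $\mathfrak h$ would lie in $\spn(\alpha_1,\dots,\alpha_i)$, contradicting the independence of $i+1\le j$ distinct weights. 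Equal weights are impossible, since they would be dependent as a pair and $j\ge2$.

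So $N=M_F$ is a face submanifold of rank $i$. Its graph $\Gamma_F$ is an $i$-face sharing $x$ and all edges at $x$ with $\Theta$. Two connection-invariant connected subgraphs agreeing at one vertex agree everywhere, by propagating along edges with $\nabla$. Hence $\Theta=\Gamma_F$.

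\textbf{Main obstacle.} The delicate step is showing that $\Gamma_F$ at a vertex $y\neq x$ contains exactly the propagated edges and no more, equivalently that the rank of $M_F$ is constant and equal to its valence everywhere. This is handled by the GKM$_j$ independence at every vertex $y$, together with Lemma~\ref{lm:efface}, which guarantees that every component meets the fixed point set.
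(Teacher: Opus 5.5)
Your proof is correct. The paper does not prove this lemma itself; it quotes it from \cite[Proposition~3.9]{so-23}. What you wrote is the standard self-contained argument:
\begin{itemize}
\item send $M_F$ to the subgraph of its fixed points and invariant $2$-spheres;
\item obtain connection-invariance from \eqref{eq:gkmcomp1};
\item recover a face submanifold from a graph face $\Theta$ as the component of $M^H$ through $x$, with $H=(\bigcap_k\ker\alpha_k)^0$;
\item identify that component with $\Theta$ by propagating along $\nabla$.
\end{itemize}

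A few points can be tightened.

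\emph{The ``main obstacle'' is easy.} $M_F$ is a connected submanifold, so $\dim T_yM_F=\dim M_F$ at every $y\in M_F^T$. The weights of a GKM action are pairwise linearly independent, so each weight space is $2$-dimensional. Hence $\Gamma_F$ has valence $\tfrac12\dim M_F$ at every vertex. Pairwise independence comes from $M_1$ being a finite union of $2$-spheres, not from the GKM$_j$ condition, which only concerns pairwise distinct weights; this is also the correct reason equal weights cannot occur. Consequently the first half of your argument holds for every face submanifold, and $i<j$ is needed only for surjectivity onto graph faces.

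\emph{The converse inclusion.} For an isomorphism of posets you also need $\Gamma_F\subseteq\Gamma_G\Rightarrow M_F\subseteq M_G$. Since $H_F$ is the identity component of the common kernel of the weights of $T_xM_F$, the inclusion $T_xM_F\subseteq T_xM_G$ at a common vertex gives $H_G\subseteq H_F$. Then $M_F\subseteq M^{H_G}$, and $M_F\subseteq M_G$ by connectedness.

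\emph{Why $N$ is a face submanifold.} In the weight-space model $\bigoplus_k V_{\alpha_k}$ of $T_xN$, the generic stabilizer has identity component exactly $H$. So $H$ is the identity component of an isotropy group of codimension $i$, and $N$ is a face submanifold in the sense of \S\ref{ssec:face}.

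\emph{Connection.} The invariance argument uses only compatibility of $\nabla$ with the axial function, not its uniqueness.
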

	
	\begin{lm}\label{lm:abspos}
		For any GKM$_{3}$ graph $\Gamma$, $S_{2}(\Gamma)$ is the cell poset of a regular CW complex.
	\end{lm}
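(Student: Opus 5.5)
We want to show that for a GKM$_3$ graph $\Gamma$ the poset $S_2(\Gamma)$ of vertices, edges and $2$-faces is the face poset of a regular CW complex. The plan is to build the complex $X$ explicitly by skeleta and then check the two defining properties: the cells are embedded balls, and the order relation matches closure.

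\textbf{Building the $0$- and $1$-skeleton.} The $0$-cells are the vertices of $\Gamma$. The $1$-cells are the unoriented edges $\{e,\overline e\}$. Since $\Gamma$ has no loops, each edge joins two distinct vertices. So the geometric realization of $(V,E)$ is a regular $1$-dimensional CW complex; multiple edges are allowed, since regularity only requires each closed $1$-cell to be an embedded arc. The poset of this $1$-skeleton is exactly $S_1(\Gamma)$. The point to record is that the $1$-faces of $\Gamma$ are precisely the single edges, because a connected $1$-valent subgraph is one edge together with its endpoints.

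\textbf{Attaching the $2$-cells.} Let $\Theta$ be a $2$-face; it is a connected $2$-valent graph, hence a cycle $v_0,e_1,v_1,\dots,e_m,v_m=v_0$. The key step is to show this cycle is \emph{simple}, i.e.\ its vertices $v_0,\dots,v_{m-1}$ are pairwise distinct. Then the attaching map $S^1\to X^{(1)}$, which traverses the cycle, is an embedding, and the closed $2$-cell is a disk with boundary embedded as $\Theta$.

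Simplicity should follow from $2$-valence: every vertex of $\Theta$ has exactly two edges of $\Theta$ at it. A connected finite graph in which every vertex has degree $2$ is a single simple cycle, except for the degenerate case $m=2$ of two parallel edges between two vertices, which is still a simple cycle of length $2$ and gives an embedded circle. So the closure of each $2$-cell is homeomorphic to a closed disk, and the boundary of the cell is the union of the vertices and edges of $\Theta$.

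\textbf{Matching the order, and the expected obstacle.} The main obstacle is making sure that distinct $2$-faces give distinct cells with the correct boundaries, and that the closure order matches the inclusion order of $S_2(\Gamma)$. Here GKM$_3$ is used.

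First, through any two distinct edges $e,e'$ at a vertex $v$ there is exactly one $2$-face. It is constructed by transporting the pair $\{e,e'\}$ along the connection, and by the Remark the compatible connection is unique under GKM$_3$. Equivalently, the face is the connected component through $v$ of the subgraph of edges whose labels lie in $\spn(\alpha(e),\alpha(e'))$.

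The role of $3$-independence is the following. A third edge $e''$ at a vertex of $\Theta$ has label $\alpha(e'')$ outside this span, so it is not an edge of $\Theta$. Hence the edge set of $\Theta$ is determined by $\Theta$, and $\Theta$ is determined by any one of its corners $(v,\{e,e'\})$.

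With this, two $2$-faces that are equal as subgraphs are the same element of $S_2(\Gamma)$, and every inclusion $\Theta'\subset\Theta$ of faces of lower rank into a $2$-face is realized as a closure relation in $X$. Conversely, the closure of the $2$-cell of $\Theta$ contains exactly the vertices and edges of $\Theta$.

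Altogether, $X$ is a regular CW complex whose cell poset is isomorphic to $S_2(\Gamma)$.
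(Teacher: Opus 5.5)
Your proposal is correct and uses the same construction as the paper: realize the graph as a $1$-dimensional CW complex and attach one disc along each $2$-face, with your observation that a connected $2$-valent subgraph is a simple cycle supplying the regularity that the paper's one-line proof leaves implicit. The GKM$_3$ material in your last part (uniqueness of the $2$-face through a pair of edges) is true but not needed here, because the closure of the $2$-cell of $\Theta$ is just the subgraph $\Theta$ and distinct $2$-faces are distinct subgraphs.
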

	\begin{proof}
		As $M_1/T$ is a graph, it has a natural CW structure, and $S_1(\Gamma)$ is its cell poset. The poset $S_2(\Gamma)$ is the cell poset of the regular CW complex obtained from $M_1/T$ by attaching two-dimensional discs along $2$-faces of $\Gamma$.
	\end{proof}
	
	Every face $F$ of the orbit space $Q$ has boundary $\partial F:=\lb x\in F\colon \dim T_{x}> \dim T-\dim F\rb$, where $T_{x}\subseteq T$ denotes the stabilizer of the orbit $x$. For further reference, we mention a particular case of a statement that was proved more generally for equivariantly formal $T$-spaces.
	
	\begin{pr}[{\cite{ma-pa-06},~\cite[Lemma~1.2]{ay-ma-23},~\cite[Proposition~3.10]{ay-ma-so-23}}]\label{pr:homcell}
		Consider a GKM $T$-manifold $M$.
		Let $\Bbbk=\Q$, or let $\Bbbk=\Z$ if the respective stabilizers are connected. 
		Then for every face $F$ of the orbit space $Q$ the following hold:
		\begin{enumerate}[label=(\roman*)]
			\item $\widetilde{H}^{*}(F;\Bbbk)=0$;
			\item $H^{i}(F,\partial F;\Bbbk)=
			\begin{cases}
				0,\ i\neq\rk F,\\
				\Bbbk,\ i=\rk F.
			\end{cases}
			$
		\end{enumerate}
	\end{pr}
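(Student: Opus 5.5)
The plan is to reduce to the case where $F$ is the whole orbit space, and then argue by induction on the rank, using exactness of the Atiyah--Bredon--Franz--Puppe (ABFP) sequence.

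\emph{Reduction to $F=Q$.} For a face $F$, the preimage $M_F=\pi^{-1}(F)$ is a connected component of some $M^H$. By Lemma~\ref{lm:efface} we have $H^{odd}(M_F;\Bbbk)=0$, and for $\rk F>1$ the effective $T/T_F$-action on $M_F$ is again GKM, with generic orbits of dimension $\rk F$. Its orbit space is $F$, and its boundary in the sense above is $\partial F$. It therefore suffices to prove (i) and (ii) for $F=Q$ and $\rk Q=\dim T=:r$. We do this by induction on $r$, using the statements for all proper faces, which have smaller rank.

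\emph{Base case $r=1$.} Here $M$ is a $2$-sphere with a circle action fixing two points. So $Q$ is a closed interval and $\partial Q$ is its two endpoints, and both (i) and (ii) are immediate.

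\emph{Inductive step.} Filter $Q$ by $Q_0\subset Q_1\subset\dots\subset Q_r=Q$, and note $\partial Q=Q_{r-1}$. By induction every proper face $G$ is acyclic and $H^*(G,\partial G)$ is $\Bbbk$ concentrated in degree $\rk G$. Hence $Q_{r-1}$ is a ``homology cell complex'': its cellular-type cochain complex has $C^i=\bigoplus_{\rk G=i}H^i(G,\partial G)\cong \Bbbk^{S_i\setminus S_{i-1}}$, and it computes $H^*(Q_{r-1})$, exactly as for a regular CW complex (compare Lemma~\ref{lm:abspos}). To determine it, use exactness of the ABFP sequence
\[
0\to H^*_T(M)\to H^*_T(M_0)\to H^{*+1}_T(M_1,M_0)\to\cdots\to H^{*+r}_T(M_r,M_{r-1})\to 0,
\]
which holds for equivariantly formal actions over $\Q$, and over $\Z$ when the stabilizers are connected. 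Using the local structure near $M_G\setminus \partial M_G$, where $T/T_G$ acts freely, each term decomposes as
\[
H^*_T(M_i,M_{i-1})\cong\bigoplus_{\rk G=i}H^*(G,\partial G)\otimes H^*(BT_G).
\]
Reducing modulo the augmentation ideal of $H^*(BT)$ (restricting to the lowest polynomial degree) then yields the cochain complex $C^\bullet$ augmented by $\Bbbk$ in degree $0$. This reduced complex computes $H^*(Q_{r-1})$ in degrees $<r-1$ and, together with the top term, $H^*(Q)$. Its exactness gives $\widetilde H^*(Q)=0$ and $H^*(Q_{r-1})\cong H^*(S^{r-1})$. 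The long exact sequence of the pair $(Q,\partial Q)$ then gives (ii).

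\emph{Main obstacle.} The hardest step is justifying the passage from the exact ABFP sequence, which consists of free modules only after one knows that each relative term is free over the appropriate polynomial ring, to exactness of the reduced complex. The freeness of the relative terms is itself what the induction hypothesis (ii) supplies. I would first set up a spectral sequence for the filtration of $M_T\to Q$, with $E_1$ given by the relative terms, and compare it with the Leray spectral sequence of the map $M_T\to Q$. The stalks of that map are $H^*(BT_x)$, so the comparison shows that the $H^*(BT)$-torsion-free part contributes only $H^*(Q)$ tensored with $H^*(BT)$ in bottom degree. The integral case requires connected stabilizers, so that $H^*(BT_x;\Z)$ is torsion free and the decompositions above hold over $\Z$.
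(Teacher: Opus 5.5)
Your inductive step fails at ``Its exactness gives $\widetilde H^*(Q)=0$ and $H^*(Q_{r-1})\cong H^*(S^{r-1})$''. The ABFP differentials preserve the internal degree, so exactness of the degree-zero slice is automatic; that is not the real obstacle. But that slice reads
\[
0\to\Bbbk\to C^0\to\cdots\to C^{r-1}\to H^{r}(Q,\partial Q)\to 0 ,
\]
and it only yields $\widetilde H^{i}(Q_{r-1})=0$ for $i<r-1$ and $H^{r-1}(Q_{r-1})\cong H^{r}(Q,\partial Q)$.

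Your induction hypothesis controls every ABFP term except the top one, $H^{*+r}_T(M,M_{r-1})\cong H^{*+r}(Q,\partial Q)$. The slices of negative and of odd degree show that this term is concentrated in degrees $r,r+2,r+4,\dots$. However, a class in $H^{r+2m}(Q,\partial Q)$ with $m\geq 1$ sits in positive internal degree. There, exactness only asks that it be hit from $\bigoplus_{\rk G=r-1}H^{r-1}(G,\partial G)\otimes H^{2m}(BT_G)$. Nothing in your argument forces $H^{r}(Q,\partial Q)$ to have rank one either.

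No spectral sequence comparison can repair this, because the conclusion is false in the stated generality. Take the GKM $T^2$-action on $\mathrm{SU}(3)/T^2$ of Section~\ref{sec:GKMcounter}. Its orbit space is $Q\cong S^4$ (Buchstaber--Terzi\'c), and $\partial Q=Q_1$ is the graph $K_{3,3}$. Hence $H^1(Q_1)\cong H^2(Q,\partial Q)\cong\Bbbk^4$ and $H^4(Q,\partial Q)\cong\widetilde H^4(Q)\cong\Bbbk$, even though the ABFP sequence is exact. So (i) and (ii) fail for the rank-$2$ face $F=Q$.

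The missing ingredient is a dimension bound, which forces you to restrict to faces of complexity zero, $\dim M_F=2\rk F$. This restriction is harmless in practice:
\begin{itemize}
\item It holds for all faces of rank $<j$ of a GKM$_j$ manifold: more than $\rk F$ pairwise distinct weights at a fixed point of $M_F$ would lie in an $\rk F$-dimensional space, contradicting $j$-independence. This is exactly the range in which the proposition is used in Proposition~\ref{pr:abmandesc}.
\item It holds for all faces of torus manifolds, the setting of \cite{ma-pa-06}.
\item The class is closed under passing to subfaces, so your reduction and induction stay inside it.
\end{itemize}

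In that case $F$ is an orientable $\rk F$-dimensional (homology) manifold with boundary $\partial F$; this is what \cite{ma-pa-06} extract from local standardness. Lefschetz duality then gives $H^i(F,\partial F)=0$ for $i>\rk F$ and $H^{\rk F}(F,\partial F)\cong H_0(F)\cong\Bbbk$. So the top ABFP term is confined to internal degree zero. Your degree-zero slice then shows that the connecting map $H^{\rk F-1}(\partial F)\to H^{\rk F}(F,\partial F)$ is an isomorphism and that $\widetilde H^{i}(\partial F)=0$ for $i<\rk F-1$. The long exact sequence of $(F,\partial F)$ then gives (i).

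So your scheme works once $\dim F=\rk F$ is built into the hypotheses. Exactness of the ABFP sequence alone cannot produce acyclicity.
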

	
	Let $o=\lb o_{p}\rb_{p\in P}$ be any a choice of orientations for all cells in a regular CW complex $X$ with the cell poset $P$.
	Denote by $[\sigma_{p}:\sigma_{q}]_{o}$ the incidence number for the cells $\sigma_{q}\subset \sigma_{p}$, $q<_{1} p$ (i.e. $q<p$ and $\rk p=\rk q+1$), in $X$ with respect to $o$.
	Recall that for any $p<_{2}q$ in $P$ the interval $(p,q)$ consists of two distinct incomparable elements, say $p',q'$~\cite{bj-84}.
	The following lemma gives a characterization of these numbers in an abstract way.
	
	\begin{lm}[{\cite[Ch.~IX, Theorem~7.2]{ma-91}}]\label{lm:cwpos}
		Let $X$ be a regular finite CW complex with cells $\sigma_{p}$, $p\in P$, where $P=P(X)$ is the cell poset of $X$.
		Then the following collections of numbers are equal:
		\begin{enumerate}[label=(\roman*)]
			\item $\lb [\sigma_{p}:\sigma_{q}]_{o}\mid  q<_{1} p\rb$, where $o=\lb o_{p}\rb_{p\in P}$ runs over all choices of orientations for all cells in $X$;
			\item $\lb [r:s]\in \lb\pm 1\rb\mid s<_{1} r\rb$
			satisfying for any $q<_{2} p$ in $P$ the condition
			\begin{equation}\label{eq:diam}
				[p:p'][p',q]+[p:q'][q':q]=0.
			\end{equation}
		\end{enumerate}
	\end{lm}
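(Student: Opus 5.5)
The plan is to prove both inclusions by induction on the rank of cells, using the cellular chain complex $C_*(X;\Z)$ with boundary $\partial\sigma_p=\sum_{q<_1 p}[\sigma_p:\sigma_q]_o\,\sigma_q$. To make \eqref{eq:diam} meaningful for $\rk q=0$, I adopt the usual convention: $P$ is augmented by a minimal element $\hat 0$ of rank $-1$, corresponding to the augmentation $C_0\to\Z$. Set $[v:\hat 0]=1$ for every vertex $v$, and likewise $[\sigma_v:\sigma_{\hat 0}]_o=1$. Then the diamond for $\hat 0<_2 e$, with $e$ an edge, says exactly that the two endpoints of $e$ get opposite incidence signs.

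\emph{(i)$\subseteq$(ii).} Since $X$ is regular, the closure of each cell $\sigma_p$ is a closed ball whose boundary sphere is the subcomplex formed by the cells $\sigma_q$, $q<p$. The degree of the attaching map onto each facet $\sigma_q$, $q<_1p$, is therefore $\pm1$, so every incidence number lies in $\{\pm1\}$. Regularity also gives the diamond property of the cell poset~\cite{bj-84}: for $q<_2p$ the open interval is exactly $\{p',q'\}$. Hence the coefficient of $\sigma_q$ in $\partial\partial\sigma_p=0$ is precisely the left-hand side of \eqref{eq:diam}, so \eqref{eq:diam} holds. The augmented case $q=\hat 0$ is $\varepsilon\circ\partial_1=0$.

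\emph{(ii)$\subseteq$(i).} Given signs $[r:s]$ satisfying \eqref{eq:diam}, I construct orientations $o_p$ by induction on $\rk p$. The induction hypothesis is that $[\sigma_r:\sigma_s]_o=[r:s]$ for all $s<_1r$ with $\rk r<k$. For vertices, orient each point positively, which matches $[v:\hat0]=1$. Now fix a cell $p$ of rank $k$. The boundary $\partial\sigma_p\cong S^{k-1}$ is a subcomplex whose cells already carry orientations, and the cellular boundary on its chain complex agrees with the given signs on all cells of rank $<k$. Consider the chain
\[
c:=\sum_{q<_1p}[p:q]\,\sigma_q\in C_{k-1}(\partial\sigma_p).
\]
By \eqref{eq:diam}, applied to all $s<_2 p$ (including $s=\hat 0$ when $k=1$, where one uses the augmented complex), $c$ is a cycle. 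Since $\partial\sigma_p$ has no $k$-cells, its cycles in degree $k-1$ form the group $\tilde H_{k-1}(S^{k-1};\Z)\cong\Z$. This group is generated by $\partial\sigma_p$ computed with an arbitrary orientation of $\sigma_p$, whose coefficients are all $\pm1$ by the first part. Hence $c=m\cdot\partial\sigma_p$ with $m\in\Z$. Comparing one coefficient gives $m=\pm1$, and choosing $o_p$ appropriately makes $m=1$. This yields $[\sigma_p:\sigma_q]_o=[p:q]$ for every facet $q$ of $p$ simultaneously, which completes the induction.

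The step I expect to be the main obstacle is this last comparison: matching the prescribed signs at one facet must force them at all facets. The homology argument, using that the top cycles of a cellular sphere are the multiples of the fundamental class, handles this uniformly. An alternative would use strong connectivity of the facet adjacency graph of $\partial\sigma_p$, propagating the sign ratio across codimension-one diamonds. The other point needing care is the low-rank bookkeeping with $\hat 0$, so that the edge case $k=1$ is covered by the same argument.
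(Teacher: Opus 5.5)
Your argument is correct and is essentially the standard proof in the source the paper cites without reproducing (Massey, Ch.~IX, Thm.~7.2): $\partial\partial=0$ together with the diamond property gives (i)$\subseteq$(ii), and induction on dimension gives (ii)$\subseteq$(i), using that the $(k-1)$-cycles of the boundary sphere of a $k$-cell are exactly the integer multiples of its cellular boundary. Your augmentation by $\hat 0$ is not merely a convenience but necessary: it encodes Massey's separate condition that the two endpoints of an edge receive opposite incidence numbers, which the statement as written does not impose unless $P$ contains an empty cell, and without it the two sets differ (e.g.\ for the boundary of a triangle). Just keep the convention consistent, either by restricting (i) to orientations with positively oriented vertices, or by setting $[\sigma_v:\sigma_{\hat 0}]_o:=\varepsilon(o_v)$ and letting $[v:\hat 0]$ range over $\{\pm 1\}$.
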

	
	\section{Atiyah-Bredon-Franz-Puppe sequences for torus actions and GKM graphs}
	
	\subsection{GKM manifold case}\label{ssec:manabfp}
	
	Consider an action of a compact torus $T$ of rank $k$ on a compact connected manifold $M$. 
	We abbreviate 
	\[
	{\overline{AB}}^{i}_{T}(M):=
	H^{*+i}_T(M_{i},M_{i-1}),
	\]
	using the convention that ${\overline{AB}}^0_{T}(M)=H^{*}_{T}(M_0)=H^*_T(M^T)$. Then the (augmented) Atiyah--Bredon--Franz--Puppe sequence (e.g. see~\cite{al-fr-pu-14}) of this action is the following cochain complex:	
	\begin{equation}\label{eq:abfpman}
		\begin{tikzcd}[sep=.3cm]
			0\arrow{r} & 
			H^{*}_{T}(M)\arrow{r} & 
			{\overline{AB}}^{0}_{T}(M)\arrow{r} &
			{\overline{AB}}^{1}_{T}(M)\arrow{r} &
			\cdots\arrow{r} &
			{\overline{AB}}^{k}_{T}(M)\arrow{r} & 0;
		\end{tikzcd}
	\end{equation}
	here, the first map is induced by the inclusion $M^T\to M$, the second is the boundary operator in the long exact sequence of the pair $(M_1,M^T)$, and the other ones are the boundary operators in the long exact sequences of the triples $(M_{i-1},M_i,M_{i+1})$. 
	As the fixed point set of GKM actions is finite, we may identify 
	\[
	{\overline{AB}}^0_T(M) = H^*_T(M^T) = \bigoplus_{p\in M^T} H^*(BT)
	\]
	in the GKM case. We further identify $H^*(BT)$ with the ring of polynomials, either integer polynomials on ${\mathfrak{t}}_\Z$, the integer lattice in the Lie algebra ${\mathfrak{t}}$, in case of integer coefficients, or rational polynomials in case of rational coefficients. We simply write $S({\mathfrak{t}}^*)$ to treat both cases simultaneously.
	
	We recall the following result:
	
	\begin{thm}[{\cite[Theorem~1.1]{fr-pu-07}}]\label{thm:ABFPexact}
		Let $M$ be any GKM $T$-manifold over $\Bbbk=\Q$ or $\Z$.
		In the latter case, additionally assume that all stabilizers of the $T$-action on $M$ are connected.
		Then ${\overline{AB}}^{*}_{T}(M)$ is exact.
	\end{thm}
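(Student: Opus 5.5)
Since Theorem~\ref{thm:ABFPexact} is quoted from~\cite{fr-pu-07}, I only sketch how I would re-derive it in the GKM setting. The argument reduces to algebraic properties of the free $S({\mathfrak{t}}^*)$-module $H^*_T(M)$. The plan is to realize ${\overline{AB}}^{*}_{T}(M)$ as the $E_1$-page (row) of the spectral sequence of the orbit filtration $M_0\subset M_1\subset\dots\subset M_k=M$. It then suffices to show that this spectral sequence degenerates with $E_2$ concentrated in the column $i=0$, and that this column equals $H^*_T(M)$.

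\textbf{Step 1 (identify the terms).} Using the decomposition of $M_i\setminus M_{i-1}$ into the open parts of the face submanifolds $M_F$ with $\rk F=i$, excision gives
\[
{\overline{AB}}^{i}_{T}(M)\cong\bigoplus_{\rk F=i}H^{*+i}_T(M_F,\,M_F\cap M_{i-1}).
\]
On $M_F\setminus (M_F\cap M_{i-1})$ the stabilizers are (up to finite groups, which is where connectedness is needed over $\Z$) the torus $T_F$, and $T/T_F$ acts almost freely. Hence each summand is $H^{*+i}(F,\partial F;\Bbbk)\otimes H^*(BT_F)$. By Proposition~\ref{pr:homcell}(ii) this is a single copy of $H^*(BT_F)=S({\mathfrak{t}}_F^*)$, shifted by $i$. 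Since $T_F$ has codimension $i$, ${\overline{AB}}^i_T(M)$ is a Cohen--Macaulay $S({\mathfrak{t}}^*)$-module of dimension $k-i$ (or zero). This identification makes the complex look like a cellular cochain complex over the face poset, with coefficients in $S({\mathfrak{t}}_F^*)$.

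\textbf{Step 2 (exactness by induction and duality).} I would follow Allday--Franz--Puppe and compare with equivariant homology. The $E_1$-row above is, up to regrading, $\operatorname{Ext}^i_{S({\mathfrak{t}}^*)}(H^T_*(M),S({\mathfrak{t}}^*))$ computed via a resolution given by the orbit filtration. Equivariant formality means $H^{odd}(M)=0$, which by~\cite[Lemma~2.1]{ma-pa-06} is equivalent to freeness of $H^*_T(M)$. Freeness forces these Ext groups to vanish for $i>0$, and for $i=0$ it gives $H^*_T(M)\cong\operatorname{Hom}(H^T_*(M),S({\mathfrak{t}}^*))$. That is exactly exactness of the augmented complex~\eqref{eq:abfpman}.

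As a more hands-on alternative adapted to GKM, I would induct on $k$. By Lemma~\ref{lm:efface} each $M_F$ with $\rk F>1$ is again GKM for $T/T_F$, with face poset the lower interval below $F$, and the inductive hypothesis gives exactness of its ABFP sequence. A Mayer--Vietoris/localization argument then glues these local exactness statements. For the top degrees I would use $\widetilde H^*(F)=0$ from Proposition~\ref{pr:homcell}(i), and exactness at ${\overline{AB}}^0,{\overline{AB}}^1$ is the Chang--Skjelbred lemma.

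\textbf{Main obstacle.} The delicate part is the homological-algebra step: identifying the cohomology of ${\overline{AB}}^*_T(M)$ in position $i$ with $\operatorname{Ext}^i(H^T_*(M),S({\mathfrak{t}}^*))$. This requires controlling the differentials of the filtration spectral sequence, not only its terms. I would first try to prove it by showing that the homological counterpart of the filtration spectral sequence degenerates, using that each filtration quotient is Cohen--Macaulay of the right dimension. Over $\Z$, the connected-stabilizer hypothesis must be invoked precisely there, so that the $H^*(BT_F)$ remain torsion-free polynomial rings.
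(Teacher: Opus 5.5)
The paper gives no argument of its own here; it simply quotes Franz--Puppe. Judged on its own, your sketch has a genuine gap exactly where the content lies.

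Step~1 is fine: it shows that $\overline{AB}^i_T(M)$ is zero or Cohen--Macaulay of dimension $k-i$. Recasting exactness as ``$E_2$ is concentrated in column $0$'' is only a restatement of the claim.

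Route (a) presupposes the Allday--Franz--Puppe identification $H^i(\overline{AB}^*_T(M))\cong\operatorname{Ext}^i_R(H^T_*(M),R)$, $R=S(\mathfrak t^*)$. That is a later and stronger theorem than the one you are proving, and the reason you give for it is wrong. For $i>0$ the modules $\overline{AB}^i_T(M)$ are torsion, so the row is not a projective resolution and does not compute $\operatorname{Ext}$ as you describe. Relating its cohomology to $\operatorname{Ext}$ is precisely the nontrivial content of that theorem.

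Route (b) names no gluing mechanism. Induction over the face submanifolds plus localization shows only that the cohomology of $\overline{AB}^*_T(M)$ has finite length. Acyclicity of $Q$ only kills its polynomial-degree-$0$ part, so the global vanishing is left open. Also, Chang--Skjelbred gives exactness at $H^*_T(M)$ and $\overline{AB}^0_T(M)$, not at $\overline{AB}^1_T(M)$. The latter is strictly stronger (a third- rather than second-syzygy condition), and it is exactly what the proof of Theorem~\ref{thmM:epi} takes from this theorem.

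The missing idea is the standard inductive argument going back to Bredon, which needs nothing beyond your Step~1. Over $\Q$ the Cohen--Macaulay property holds for any finite-dimensional $H^*(F,\partial F)$, so Proposition~\ref{pr:homcell} is not needed. That is just as well, since the homology-cell property of faces may itself be derived from this exact sequence. The argument:
\begin{enumerate}[label=(\roman*)]
\item Put $N_i:=H^*_T(M,M_{i-1})$ with $M_{-1}=\emptyset$, so $N_0=H^*_T(M)$ is free.
\item Every point of $M\setminus M_i$ has isotropy of dimension at most $k-i-1$, so by the localization theorem $\dim H^*_T(M,M_i)\le k-i-1$.
\item Suppose $N_i$ is zero or Cohen--Macaulay of dimension $k-i$. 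In the long exact sequence of the triple $(M,M_i,M_{i-1})$, the maps $H^*_T(M,M_i)\to N_i$ then vanish, because a Cohen--Macaulay module has no nonzero submodule of smaller dimension. Hence $0\to N_i\to H^*_T(M_i,M_{i-1})\to H^{*+1}_T(M,M_i)\to 0$ is exact.
\item The depth lemma gives $\operatorname{depth}N_{i+1}\ge k-i-1\ge\dim N_{i+1}$, so $N_{i+1}$ is again zero or Cohen--Macaulay of dimension $k-i-1$.
\item Splice these short exact sequences. The composites $H^*_T(M_i,M_{i-1})\to H^{*+1}_T(M,M_i)\to H^{*+1}_T(M_{i+1},M_i)$ are the ABFP differentials by naturality of connecting maps, so this yields exactness of the augmented sequence.
\end{enumerate}
Freeness of $H^*_T(M)$, i.e.\ equivariant formality, enters exactly at $i=0$. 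Over $\Z$ the same scheme runs with Krull dimensions shifted by one. Connectedness of the stabilizers is what makes both the localization step and the Cohen--Macaulay property of the $\overline{AB}^i_T(M)$ work.
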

	
	\begin{pr}[{\cite[proof of Lemma~1.2]{ay-ma-23}}]\label{pr:abmandesc}
		Let $M$ be a GKM$_{j}$ manifold over $\Bbbk=\Q$ or $\Z$. If $\Bbbk=\Z$, assume that all stabilizers of the $T$-action on $M$ are connected. Then one has
		\[
		{\overline{AB}}^i(M)\cong \bigoplus_{F:\, \dim F = i} S({\mathfrak{t}}_F^*),\ 
		i< j
		\]
		and the connecting homomorphism (at a component $F<_{1} G$) in~\eqref{eq:abfpman} coincides with the natural projection
		\[
		S({\mathfrak{t}}_F^*)\to S({\mathfrak{t}}_G^*),
		\]
		up to a sign $[G:F]\in \lb \pm 1\rb$ satisfying~\eqref{eq:diam}.
	\end{pr}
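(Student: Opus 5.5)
Here $M_F=\pi^{-1}(F)$ is the face submanifold of a face $F$ of rank $i<j$. By Lemma~\ref{lm:efface}, the effective $T/T_F$-action on $M_F$ is GKM with finitely many fixed points. The plan is therefore to compute each relative term ${\overline{AB}}^i(M)=H^{*+i}_T(M_i,M_{i-1})$ by splitting it over these face submanifolds, and then to identify each summand with a polynomial ring via a Thom-type isomorphism. First, $M_i\setminus M_{i-1}$ is the disjoint union of the open strata $\pi^{-1}(F\setminus\partial F)$ over rank-$i$ faces $F$. Excision, applied to tubular neighbourhoods of $M_{i-1}$ inside $M_i$, then gives
\[
H^*_T(M_i,M_{i-1})\cong\bigoplus_{\rk F=i}H^*_T(M_F,\partial M_F),\qquad \partial M_F:=\pi^{-1}(\partial F).
\]

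The next step computes each summand. On $M_F\setminus\partial M_F$ the stabilizer is constant; it equals $T_F$, which is connected in the integral case and otherwise is harmless over $\Q$. Writing $T=T_F\times T'$ with $T'\cong T/T_F$ acting locally freely on the stratum, we get
\[
H^*_T(M_F,\partial M_F)\cong H^*(BT_F)\otimes H^*_{T'}(M_F,\partial M_F)\cong H^*(BT_F)\otimes H^*(F,\partial F).
\]
By Proposition~\ref{pr:homcell}(ii), $H^*(F,\partial F;\Bbbk)$ is $\Bbbk$ concentrated in degree $\rk F=i$. Hence the degree-shifted group is $H^*(BT_F)=S({\mathfrak{t}}_F^*)$, where ${\mathfrak{t}}_F$ is the Lie algebra of $T_F$, and it is generated by a Thom-type class $\tau_F$ of degree $i$. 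Choosing a generator of $H^i(F,\partial F)\cong\Bbbk$ is the same as choosing an orientation $o_F$ of the cell $F$. I would only use the GKM$_j$ hypothesis for $i<j$, and only through Lemma~\ref{lm:gmpos}: faces of rank $<j$ correspond to faces of $\Gamma$, and each is a genuine cell with the correct $T_F$.

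For the differential at a pair $F<_1G$, consider the triple $(M_{i+1},M_i,M_{i-1})$ restricted near $M_F$. Its connecting map sends $f\cdot\tau_F$ to $\pm\,\mathrm{res}(f)\cdot\tau_G$. Here $\mathrm{res}\colon S({\mathfrak{t}}_F^*)\to S({\mathfrak{t}}_G^*)$ is restriction along $T_G\subset T_F$, i.e.\ the natural projection, since the class $f$ pulled back from $BT_F$ is simply restricted to the smaller stabilizer. The sign is the incidence number of the relative cohomology connecting map $H^i(F,\partial F)\to H^{i+1}(G,\partial G)$ with respect to the chosen orientations. The orbit-space faces of rank $\le i+1<j$ form a regular CW complex; this follows from Proposition~\ref{pr:homcell}(i),(ii) applied to every face together with Lemma~\ref{lm:abspos}-type reasoning. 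These signs are therefore cellular incidence numbers $[G:F]_o$, and by Lemma~\ref{lm:cwpos} they satisfy~\eqref{eq:diam}.

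The main obstacle is identifying the connecting homomorphism precisely as the projection times $[G:F]$. This requires a naturality argument: the Thom classes $\tau_F,\tau_G$ must be compatible with the equivariant connecting map, after reducing, via the slice theorem near $\pi^{-1}(F\setminus\partial F)\subset M_G$, to a product situation $D\times G$-stratum. The torsion-freeness of the integral case, which is needed for the splitting $T=T_F\times T'$, is where the connected stabilizers assumption enters.
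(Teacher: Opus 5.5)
Your outline matches the paper's proof. You split $H^*_T(M_i,M_{i-1})$ over the rank-$i$ faces, use freeness of $T/T_F$ on the open stratum to get $H^*(F,\partial F)\otimes H^*(BT_F)$, and invoke Proposition~\ref{pr:homcell} so that an orientation of the homology cell identifies the summand with $S(\mathfrak{t}_F^*)$. You then compute the connecting map as restriction along $T_G\subset T_F$ up to sign, which the paper also treats as a standard computation.

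The one step that does not hold as stated is your derivation of \eqref{eq:diam}. Proposition~\ref{pr:homcell} only makes the faces homology cells. In rank $\geq 3$ a homology cell need not be a ball: a punctured Poincar\'e homology sphere is acyclic with boundary $S^2$. So there is in general no regular CW complex of orbit-space faces to which Lemma~\ref{lm:cwpos} applies, and Lemma~\ref{lm:abspos} only covers rank $\leq 2$.

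You do not need that structure. As the paper does, use that \eqref{eq:abfpman} is a cochain complex. For $F<_2 H$, the component of $d\circ d$ from $S(\mathfrak{t}_F^*)$ to $S(\mathfrak{t}_H^*)$ is $([H:G_1][G_1:F]+[H:G_2][G_2:F])\,\pi(F,H)$. Here $G_1,G_2$ are the intermediate faces, and there are exactly two of them: in ranks $<j$, faces through a vertex of $F$ correspond bijectively to subsets of the edges at that vertex, by the GKM$_j$ condition. Both paths give the same surjection $\pi(F,H)\neq 0$, so the coefficient vanishes, which is \eqref{eq:diam}. Equivalently, within your own framework, the composite of two consecutive relative connecting maps of the filtration by the $Q_i$ is zero.
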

	\begin{proof}
		We have the following chain of isomorphisms
		\begin{align}\label{eq:compab}
			{\overline{AB}}^i(M) & = H^*_T(M_i,M_{i-1}) = \bigoplus_{F:\, \dim F=i} H^*_T(M_F,M_F\cap M_{i-1}) \\
			&\nonumber= \bigoplus_{F:\, \dim F = i} H^*(F,\partial F)\otimes H^*(BT_F) \cong \bigoplus_{F:\, \dim F = i} S({\mathfrak{t}}_F^*).
		\end{align}
		The last two isomorphisms in~\eqref{eq:compab} follow by freeness of the $T/T_{F}$-action on $M_F\setminus M_{i-1}$ and $(F,\partial F)$ being a homology disk by Proposition~\ref{pr:homcell}.
		In particular, we have to make choices of orientations for every homology cell $(F,\partial F)$.
		This implies the first claim of the proposition.
		The second claim follows by a standard computation, using the above made choice of orientations.
		Since~\eqref{eq:abfpman} is a cochain complex, we have~\eqref{eq:diam}, i.e. the last claim of the proposition.
	\end{proof}
	
	\begin{ex}
		Given a GKM manifold $M$, for $i=0,1$, the maps in the ABFP sequence
		\[
		\bigoplus_{v\in V} S({\mathfrak{t}}_v^*) \cong {\overline{AB}}^0(M) \to {\overline{AB}}^{1}(M) \cong \bigoplus_{e\in E}  S({\mathfrak{t}}_e^*)
		\]
		can be understood as follows: a polynomial $f \in S({\mathfrak{t}}_v^*)$ maps to the tuple of all incident to $v$ edges in the graph, whose $e$-component contributes the restriction of $f$ to ${\mathfrak{t}}_e={\mathfrak{t}}_v/(\alpha(e))$, with sign corresponding to whether the orientations of $v$ and $e$ match or not.
	\end{ex} 
	
	\subsection{GKM graph case}
	
	In \S\ref{ssec:manabfp}, we rewrote (up to a cochain isomorphism) the part of the ABFP sequence of a GKM$_{j}$ $T$-manifold up to ${\overline{AB}}^{j-1}(M)$ purely in terms of the GKM graph of the action (and an auxiliary choice of orientation of each face). 	In the sequel we will be mainly interested the GKM$_3$ setting; both in the geometric situation of GKM actions on manifolds as well as the combinatorial situation of abstract GKM graphs. So let us define an ABFP-type sequence for an abstract GKM$_{3}$ graph in small degrees.

	For every face $F$ of the $T$-labeled GKM graph $\Gamma$, let $\mathfrak{t}_{F}:=\spn\la \alpha(e)\mid e\in E_{F}\ra$ be the $\Z$-span of the axial function values on the edges of $F$.
	Denote by $T_{F}\subseteq T$ the subtorus corresponding to $\mathfrak{t}_{F}\subset\mathfrak{t}$ (e.g. defined by the exponential map).
	
	\begin{defn}[{Compare with complex $\mathcal{B^{*}(P)}$~\cite{fr-16'}}]
		Let $\Gamma$ be an abstract GKM$_{3}$ graph.
		Let $\lb [G:F]\mid | F<_{1} G\rb$ be a sign choice for the poset $S_{2}(\Gamma)$ (see Lemmas~\ref{lm:abspos},~\ref{lm:cwpos}).
		Define the (augmented) ABFP-type sequence
		\begin{equation}\label{eq:abfpgra}
			\begin{tikzcd}[sep=.3cm]
				0\arrow{r} & 
				H^{*}_T(\Gamma)\arrow{r} & 
				{\overline{AB}}^{0}(\Gamma)\arrow{r} &
				{\overline{AB}}^{1}(\Gamma)\arrow{r} &
				{\overline{AB}}^{2}(\Gamma),
			\end{tikzcd}
		\end{equation}
		where
		\[
		{\overline{AB}}^{i}(\Gamma)=\bigoplus_{F:\, \dim F = i} H^*(BT_F)=\bigoplus_{F:\, \dim F = i} S({\mathfrak{t}}_F^*),\ 
		i=0,1,2,
		\]
		\[
		d=(d_{F}),\ d_{F}:=\sum_{F<_{1} G} [G:F]\pi(F,G),\ 
		\pi(F,G)\colon S({\mathfrak{t}}_F^*)\to S({\mathfrak{t}}_G^*),
		\]
		and $\pi(F,G)$ is the quotient map induced by the embedding $T_{G}\subset T_{F}$.
	\end{defn}
	
	\begin{lm}\label{lm:ABFPisomorphic}
		The sequence~\eqref{eq:abfpgra} is a cochain complex which does not depend on the choices of signs, up to an isomorphism.
		If $\Gamma=\Gamma(M)$ for a GKM$_{3}$ manifold then the complexes $\overline{AB}^{*}(\Gamma)$ and $\overline{AB}^{*}(M)$ are isomorphic in degrees up to $\overline{AB}^2$.
	\end{lm}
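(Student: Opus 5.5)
We verify three things in turn: that $d\circ d=0$ throughout \eqref{eq:abfpgra}, that a change of sign choice produces an isomorphic complex, and that for $\Gamma=\Gamma(M)$ each term and each map of \eqref{eq:abfpgra} matches the corresponding piece of $\overline{AB}^{*}(M)$ from Proposition~\ref{pr:abmandesc}.

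\emph{Cochain complex.} First we check that the augmentation $H^{*}_{T}(\Gamma)\to \overline{AB}^{0}(\Gamma)$, $f\mapsto (f(v))_{v}$, composes to zero with $d$. For an edge $e$ joining $v$ and $w$, the $e$-component of $d(f)$ is $\pm(\pi(v,e)f(v)-\pi(w,e)f(w))$, because the sign condition \eqref{eq:diam} in a 1-cell forces the two endpoint incidences to be opposite. Here $\pi(v,e)$ is reduction modulo $\alpha(e)$: the torus $T_e$ is the codimension-one subtorus with $\mathfrak{t}_e^{*}=\mathfrak{t}^{*}/(\alpha(e))$ in the paper's convention (matching the Example after Proposition~\ref{pr:abmandesc}). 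Consequently this component vanishes exactly when $f(v)-f(w)\in(\alpha(e))$, and so $H^*_T(\Gamma)\to\overline{AB}^0\to\overline{AB}^1$ is zero. For $d^{1}\circ d^{0}$ and more generally $d\circ d$, consider a pair $F<_{2}G$ in $S_{2}(\Gamma)$, i.e.\ a vertex $v$ in a $2$-face $G$. The interval $(v,G)$ consists of exactly two edges $e,e'$ at $v$ lying in $G$; this uses Lemma~\ref{lm:abspos}, which is where GKM$_3$ enters, since the $2$-face through two edges at $v$ is unique. The projections compose functorially: $\pi(e,G)\pi(v,e)=\pi(v,G)=\pi(e',G)\pi(v,e')$, both being the quotient map induced by $T_G\subset T_e\subset T$. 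The $(v,G)$-component of $d^2$ is then $([G:e][e:v]+[G:e'][e':v])\,\pi(v,G)=0$ by \eqref{eq:diam}.

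\emph{Independence of signs.} By Lemma~\ref{lm:cwpos} every admissible sign choice arises as the incidence numbers of the regular CW complex from Lemma~\ref{lm:abspos} for some orientation $o$. Two orientations differ by signs $\epsilon_{F}\in\{\pm1\}$, and then $[G:F]'=\epsilon_{G}\epsilon_{F}[G:F]$. The diagonal map multiplying the $F$-summand by $\epsilon_{F}$ is therefore a cochain isomorphism. On the augmentation term we use the identity if all $\epsilon_v=1$. If not, we first renormalize: we may choose orientations of $0$-cells to be positive without loss, since points carry canonical orientations in the CW incidence convention, or alternatively we extend the isomorphism by $\pm1$ vertexwise. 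The latter option changes the augmentation map only in sign per vertex, so $H^*_T(\Gamma)$ is unaffected as a kernel. The main subtlety here is this bookkeeping; I would simply fix canonical positive orientations of vertices.

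\emph{Comparison with $M$.} By Lemma~\ref{lm:gmpos} with $j=3$, $S_{2}(M)=S_{2}(\Gamma)$. For a face $F$ of rank $\le2$, the isotropy Lie algebra of $M_F$ is cut out by the weights of the tangent representation of $M_F$ at a fixed point, which are exactly the labels of the edges of $F$. Hence $\mathfrak{t}_F$ as defined for $\Gamma$ agrees with the one in Proposition~\ref{pr:abmandesc}, up to the same identification of annihilators and quotients. Proposition~\ref{pr:abmandesc}, applicable since $i\le 2<3$, identifies $\overline{AB}^{i}(M)\cong\bigoplus_F S(\mathfrak{t}_F^*)$ with differentials the projections $\pi(F,G)$ up to signs satisfying \eqref{eq:diam}. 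These are thus a legitimate sign choice for $\Gamma$, and by the previous paragraph the complexes are isomorphic. The augmentation term matches by the Chang–Skjelbred isomorphism $H^*_T(M)\cong H^*_T(\Gamma)$ compatible with restriction to $M^T$, which holds over $\Z$ under the connected-stabilizer hypothesis.

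The step I expect to require the most care is the precise identification of $\mathfrak{t}_F$ with the geometric isotropy data, including lattice saturation in the integer case. This is where I would invoke connectedness of stabilizers, to guarantee that the span of the labels is primitive.
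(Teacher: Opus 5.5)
Your proposal is correct and follows the same route as the paper. You obtain $d\circ d=0$ from the functoriality $\pi(F,F')\pi(F',G)=\pi(F,G')\pi(G',G)$ together with \eqref{eq:diam}, independence of signs from Lemma~\ref{lm:cwpos} plus the diagonal $\pm1$ rescaling, and the comparison with $M$ from Lemma~\ref{lm:gmpos} and Proposition~\ref{pr:abmandesc}. Along the way you make explicit several points the paper leaves implicit: the augmentation step, the sign-change isomorphism, and integrality of $\mathfrak{t}_F$.

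One small correction: \eqref{eq:diam} only constrains pairs $q<_{2}p$ inside $S_2(\Gamma)$, so it does not by itself force $[e:v]=-[e:w]$ at the two endpoints of an edge. This is a normalization you should impose explicitly, either by orienting all vertices positively or by twisting the augmentation by vertex signs, as you in effect do later.
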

	\begin{proof}
		The first claim follows from
		\[
		\pi(F,F')\pi(F',G)=
		\pi(F,G')\pi(G',G),
		\]
		where $F<F',G'<G$ in $S(\Gamma)$, and~\eqref{eq:diam} immediately.
		The second follows from Lemma~\ref{lm:cwpos}, since the signs are given by orientations of the homology cells by the construction.
		This implies the last claim of the lemma by Proposition~\ref{pr:abmandesc}.
	\end{proof}
	
	\begin{ex}
		Consider an abstract GKM$_{3}$ graph $\Gamma$. We fix an orientation of each edge of $\Gamma$, as well as an orientation of each $2$-face of $\Gamma$ (i.e., a direction in which we traverse the connection path that bounds the face). Then we obtain an ABFP-type sequence for $\Gamma$ of the form
		\[
		\begin{tikzcd}[sep=.3cm]
			0\arrow{r} & 
			H^{*}_T(\Gamma)\arrow{r} & 
			{\overline{AB}}^{0}(\Gamma)\arrow{r} &
			{\overline{AB}}^{1}(\Gamma)\arrow{r} &
			{\overline{AB}}^{2}(\Gamma),
		\end{tikzcd}
		\]
		The first map is the natural inclusion; the second is
		\[
		{\overline{AB}}^0(\Gamma)\to {\overline{AB}}^1(\Gamma);\, (f_v)_{v\in V(\Gamma)} \mapsto (g_e)_{e\in E(\Gamma)},
		\]
		where $g_e = \left.(f_{i(v)}-f_{t(v)})\right|_{{\mathfrak{t}_e}}$, and the third is
		\[
		{\overline{AB}}^1(\Gamma)\to {\overline{AB}}^2(\Gamma);\, (g_e)_{e\in E(\Gamma)} \mapsto (h_F)_{F \textrm{ a }2-\textrm{face}}
		\]
		where $h_F=\left.\sum_{e \textrm{ edge of }F}\pm g_e\right|_{{\mathfrak{t}}_F}$, with the sign in front of $g_e$ being $+1$ if in the chosen orientation of $F$ the edge $e$ is traversed in positive direction, otherwise $-1$.
	\end{ex}
	
	By definition of equivariant graph cohomology, the above ABFP sequence is always exact at $H^*_T(\Gamma)$ (i.e. $H^*_T(\Gamma)$ is torsion-free) and at ${\overline{AB}}^0(\Gamma)$. By~\cite[Theorem~3.2]{al-fr-pu-21} this implies that $H^*_T(\Gamma;\Bbbk)$ is a second syzygy, i.e., a reflexive $H^{*}(BT;\Bbbk)$-module ($\Bbbk=\Q,\Z$). Note that for the proof of this implication in \cite[Theorem~3.2]{al-fr-pu-21} only their Condition $(1)$ is used, which is satisfied for ${\overline{AB}}^i(\Gamma)$ (for $\Bbbk=\Z$ this uses connectedness of the respective stabilizers).
    In the GKM manifold case, reflexivity of $H^*_T(\Gamma;\Bbbk)$ and exactness of the ABFP sequence at the zero term are equivalent by~\cite[Thm.~1.1]{fr-16'}. Alternatively, one may also prove the reflexivity of $H^*_T(\Gamma;\Bbbk)$ for any abstract GKM graph $\Gamma$ for any $\Bbbk=\Q,\Z$ using~\cite[Chap.~VII, \S~4, No.2, Proposition~6~(i), p.519]{bo-72}, as $H^*_T(\Gamma;\Bbbk)$ is the intersection of the free modules 
    \[
	M_{e}:=\lb f\colon V\to H^{*}(BT;\Bbbk)\mid f(t(e))-f(i(e))\in (\alpha(e))\rb.
	\] 
	
	\section{Proof of Theorem~\ref{thmM:epi}}
	
	Let us prove Theorem~\ref{thmM:epi}:
	
	\begin{proof}
		Consider first any codimension $1$ embedding of tori $K\subset T$ defined by a nonzero element $x\in H^2(BT)$. In this situation we have the following commutative diagram
		\[
		\begin{tikzcd}
			&0 \ar[d] & 0 \ar[d] & 0 \ar[d] & 0 \ar[d]\\
			0 \ar[r] & H^*_T(\Gamma_T) \ar[r]\arrow{d}{\cdot x} & \overline{AB}^0(\Gamma_T) \ar[r]\arrow{d}{\cdot x} & \overline{AB}^1(\Gamma_T) \ar[r] \arrow{d}{\cdot x} & \overline{AB}^2(\Gamma_T)\arrow{d}{\cdot x}\\
			0 \ar[r] & H^*_T(\Gamma_T) \ar[r]\arrow {d}{p_*} & \overline{AB}^0(\Gamma_T) \ar[r]\ar[d] & \overline{AB}^1(\Gamma_T) \ar[r] \ar[d] & \overline{AB}^2(\Gamma_T)\ar[d]\\
			0 \ar[r] & H^*_K({\Gamma_K}) \ar[r] & \overline{AB}^0({\Gamma_K}) \ar[r]\ar[d] & \overline{AB}^1({\Gamma_K}) \ar[r] \ar[d] & \overline{AB}^2({\Gamma_K})\ar[d]\\
			& & 0 & 0 & 0.
		\end{tikzcd}
		\]
		whose rows are ABFP sequences. As the $K$-action on $M$ is equivariantly formal and satisfies GKM$_3$, we can identify the bottom row with a portion of the topological ABFP sequence of this action by Lemma~\ref{lm:cwpos}, which is known to be exact by Theorem~\ref{thm:ABFPexact}.  The above two rows are exact at the positions $H^*_T(\Gamma_T)$ and $AB^0(\Gamma_T)$ by definition of equivariant graph cohomology, but a priori exactness at $\overline{AB}^1(\Gamma_T)$ is unclear. The vertical columns of the diagram are exact.
		
		We will show that in this situation the top two rows of the diagram are also exact at $\overline{AB}^1(\Gamma_T)$, and $p_*$ is surjective. As a consequence, the first column in the diagram will turn out to be a short exact sequence. Let us show that this then implies that $H^*_T(\Gamma_T)$ is a free module. To this end, let $\omega_i\in H^*_T(\Gamma_T)$ be elements such that the $p_*\omega_i$ form a $H^*(BK)$-basis of $H^*_K(\Gamma_K)$. We prove that the $\omega_i$ form an $H^*(BT)$-basis of $H^*_T(\Gamma_T)$. To see that the $\omega_i$ are linearly independent, assume that there was a nontrivial linear combination $\sum_{i=1}^n f_i \omega_i=0$. Applying $p_*$ and using that that the $p_*\omega_i$ are linearly independent, it follows that $f_i\circ p = 0$ for all $i$, i.e., $f_i = x f'_i$ for some $f'_i\in H^*(BT)$. Hence, $0 = x\cdot \sum_{i=1}^n f'_i\omega_i$, and as $H^*_T(\Gamma_T)$ is torsion-free, it follows that $\sum_{i=1}^n f'_i\omega_i=0$. As we may repeat this argument, we arrive at a contradiction: after finitely many iterations, some coefficient will have degree zero and cannot be divisible by $x$. The fact that the $\omega_i$ generate is shown by induction on the degree, using that the $p_*\omega_i$ generate. Indeed, one of the $\omega_i$ is necessarily in $H^0_T(\Gamma_T)$ (respectively $\pm 1\in H^0_T(\Gamma_T)$ in case of integer coefficients), as such an element is needed to generate $H^*_K(\Gamma_K)$. For the induction step, given $\eta\in H^*_T(\Gamma_T)$, as the $p_*\omega_i$ generate  $H^*_K(\Gamma_K)$, we may subtract a certain linear combination of the $\omega_i$ from $\eta$ to obtain an element in the kernel of $p_*$. Then write $\eta = x\cdot \eta'$ for some $\eta'\in H^*_T(\Gamma_T)$ and apply the induction hypothesis to $\eta'$.
		
		This proves all statements of the theorem for codimension one extensions. For an arbitrary extension $K\subset T$ we may choose a sequence of codimension one  subtori $K = T_0\subset T_1\subset \cdots \subset T_r = T$ and apply the above argument successively to the extension of GKM graphs corresponding to the embedding $T_i\subset T_{i+1}$ for all $i=0,\dots,r-1$. This is possible because the lowest row of the next diagram is the same as the upper two rows of the previous diagram, hence exact by the previous step.
		
		So it is sufficient to consider the diagram above. We claim exactness of the top two rows at $\overline{AB}^1(\Gamma_T)$, by induction on the degree. Exactness in the lowest degree (i.e., constant polynomials) is clear because in this degree the sequence is the same as that in the lowest degree for $K$ (i.e., the third row). The induction step is a straightforward diagram chase, starting with an element in $\overline{AB}^1(\Gamma_T)$ in the middle row that maps to zero in $\overline{AB}^2(\Gamma_T)$.
		
		Using exactness of the upper two rows of the diagram, another straightforward diagram chase proves that $p_*$ is surjective. The statement on the kernel of $p_*$ follows by exactness of the left vertical short exact sequence (inductively, for a chain of codimension one extensions).	
	\end{proof}
	
	\section{A GKM$_2$ counterexample} \label{sec:GKMcounter}
	
	In this section, cohomology is taken with integer or rational coefficients. Consider the $T^2$-action on the flag manifold ${\mathrm{SU}}(3)/T^2$ by left multiplication. This action is well-known to be of GKM type~\cite{gu-ho-za-06}. Let $\Gamma$ be its GKM graph with respective axial function $\alpha$. It admits an extension to an (unsigned) GKM graph $\tilde{\Gamma}$ with $T^3$-labeling with the respective epimorphism $p\colon H^{2}(BT^{3})\to H^{2}(BT^{2})$, as given in Fig.~\ref{fig:3gr}.
	We use the notion of a Thom class for an edge of a $3$-valent unsigned GKM graph, as defined in~\cite[Definition~2.15]{go-ko-zo-22'}.
	
	\begin{figure}[h]
		\begin{center}
			\begin{tikzpicture}
				\begin{scope}
					\draw[very thick] (1,1) -- ++(-3,0) -- ++(4,-4) -- ++(0,3);
					\draw[ very thick] (2,0) -- ++(-1,1);
					\draw[very thick] (1,-3)--++(0,4);
					\draw[very thick] (1,-3) -- ++(-3,3) -- ++ (4,0);
					\draw[very thick] (1,-3)--++(1,0);
					\draw[very thick] (-2,0) -- ++(0,1);
					\draw[very thick] (-2,1) -- ++(3,0);
					\draw[very thick] (1,1) -- ++(0,-4);
					\draw[very thick] (2,-3) -- ++(0,3);
					\draw[very thick] (2,0) -- ++(-4,0);
					\node at (-2.4,.5){$x$};
					\node at (1.3,-1.2){$x$};
					\node at (2.4,-1.6){$x$};
					\node at (1.8,.7){$z$};
					\node at (.3,-.85){$z$};
					\node at (-.6,-1.8){$z$};
					\node at (1.5,-3.35) {$y$};
					\node at (-.5,1.3) {$y$};
					\node at (-.2,.25) {$y$};
					\node at (1,1)[circle,fill,inner sep=2pt]{};

					\node at (-2,1)[circle,fill,inner sep=2pt]{};

					\node at (1,-3)[circle,fill,inner sep=2pt]{};

					\node at (-2,0)[circle,fill,inner sep=2pt]{};

					\node at (2,0)[circle,fill,inner sep=2pt]{};

					\node at (2,-3)[circle,fill,inner sep=2pt]{};
					
					\node at (-2.3,1.2){$1$};
					\node at (1.3,1.2){$2$};
					\node at (2.3,-0.2){$3$};
					\node at (2.3,-3.2){$4$};
					\node at (.7,-3.2){$5$};
					\node at (-2.3,-0.2){$6$};
				\end{scope}

			\end{tikzpicture}
			\caption{An unsigned torus graph extending the GKM graph of the flag manifold ${\mathrm{SU}}(3)/T^2$.}
			\label{fig:3gr}
		\end{center}
	\end{figure}
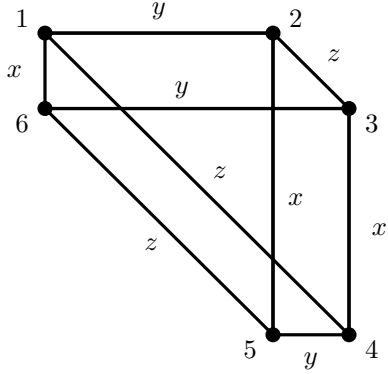
	
	\begin{pr}
		$H^*_{T^3}(\tilde{\Gamma})$ is not a free $H^*(BT^3)$-module. Further, the natural projection $p_*:H^*_{T^3}(\tilde{\Gamma})\to H^*_{T^2}(\Gamma)$ is not surjective.
	\end{pr}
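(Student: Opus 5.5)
The plan is a direct computation in low degrees, exploiting how special the labels of $\tilde{\Gamma}$ are. By Fig.~\ref{fig:3gr}, every vertex of $\tilde{\Gamma}$ has exactly one incident edge labelled $x$, one labelled $y$ and one labelled $z$, where $x,y,z$ form a basis of $H^2(BT^3)$. The map $p$ sends them to the three positive roots of ${\mathrm{SU}}(3)$, say $x\mapsto\alpha$, $y\mapsto\beta$, $z\mapsto\alpha+\beta$, up to sign. The structure of the argument is: compute $H^2_{T^3}(\tilde{\Gamma})$ and its image under $p_*$, compare with $H^2_{T^2}(\Gamma)$ to get non-surjectivity, and then compare with the dimension count a free module of rank $6$ would force to get non-freeness. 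We work over $\Q$; the integral case then follows, since freeness and surjectivity over $\Z$ imply them after tensoring with $\Q$.

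\emph{Degree two.} Write $f(v)=a_v x+b_v y+c_v z$. Across an $x$-edge the difference must lie in $(x)$, which forces $b$ and $c$ to be constant along $x$-edges; similarly for $y$- and $z$-edges. Hence $a$ is constant on connected components of the $\{y,z\}$-coloured subgraph, and analogously for $b$ and $c$. These components are exactly the $2$-faces of $\tilde{\Gamma}$, so $\dim H^2_{T^3}(\tilde{\Gamma})$ equals the number of $2$-faces. Reading off the figure, I expect each two-coloured subgraph to be a single hexagon through all six vertices, i.e.\ a connected $2$-factor of $K_{3,3}$. This would give $H^2_{T^3}(\tilde{\Gamma})=H^2(BT^3)$, consisting only of constant functions, so $p_*(H^2_{T^3}(\tilde{\Gamma}))=H^2(BT^2)$ is $2$-dimensional. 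On the other hand, $H^*_{T^2}({\mathrm{SU}}(3)/T^2)$ is free of rank $6$ with Poincar\'e polynomial $1+2t^2+2t^4+t^6$ for ordinary cohomology, so $\dim H^2_{T^2}(\Gamma)=2+2=4$. This proves that $p_*$ is not surjective. The key check, and the main obstacle, is the combinatorial one: verifying from the picture that each two-coloured subgraph is connected. If instead some of them split into two squares, the same count still works provided the total number of $2$-faces is less than $4+\dim\ker p=5$. In that case I would compute the images of the nonconstant face classes explicitly and show that they, together with the constants, span less than $H^2_{T^2}(\Gamma)$.

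\emph{Non-freeness.} After localizing, $H^*_{T^3}(\tilde{\Gamma})$ has rank $|V|=6$, as for any GKM graph. Suppose it were free. It has a single generator in degree $0$, namely $1$, and by the degree-two computation no generators in degree $2$. A free basis would therefore consist of $1$ and five generators in degree $\ge 4$, and so $\dim H^4_{T^3}(\tilde{\Gamma}) = \dim S^2(\Q^3) + \#\{\text{degree-}4\text{ generators}\} = 6 + m$ with $m\le 5$.

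I would then bound $\dim H^4$ from below by enough explicit classes to force a contradiction. The candidates are the edge Thom classes in the sense of \cite[Definition~2.15]{go-ko-zo-22'}: for an edge $e$ with label $x$, the class supported on the two endpoints of $e$ with value $\pm yz$ there, together with its analogues for $y$- and $z$-edges. This gives nine classes, one per edge. The step needing care is to check that, modulo the $6$-dimensional space $S^2$ of products of constants, these span a space of dimension greater than $5$; this is a finite linear-algebra computation on six vertices.

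If that count turns out to be insufficient, the alternative is to prove non-freeness structurally rather than numerically. A free module $H^*_{T^3}(\tilde{\Gamma})$ together with the left column of the diagram in the proof of Theorem~\ref{thmM:epi} would make $H^*_{T^3}(\tilde{\Gamma})/\ker p\cdot H^*_{T^3}(\tilde{\Gamma})$ a free $H^*(BT^2)$-module of rank $6$ generated in degrees $0$ and $\ge 4$. I would then compare its Hilbert series with the rank-$6$ localization count and derive a contradiction from there.
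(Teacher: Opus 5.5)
Your argument is correct, and the two computations you left open both work out. From the figure, the $x$-, $y$- and $z$-edges are $\{16,25,34\}$, $\{12,36,45\}$ and $\{14,23,56\}$. So the three two-coloured subgraphs are the hexagons $1\,6\,3\,4\,5\,2$, $1\,6\,5\,2\,3\,4$ and $1\,2\,3\,6\,5\,4$, which gives $H^2_{T^3}(\tilde{\Gamma})=H^2(BT^3)\cdot 1$. In degree $4$, write $\chi_{uv}$ for the indicator function of $\{u,v\}$. The Thom class of an $x$-, $y$-, $z$-edge $uv$ is then $yz\,\chi_{uv}$, $xz\,\chi_{uv}$, $xy\,\chi_{uv}$ respectively. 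Each colour class is a perfect matching, so the only relations with $H^4(BT^3)\cdot 1$ are $\sum_{\alpha(e)=x}\mathrm{Th}_e=yz\cdot 1$ and its two analogues. Hence these classes and the constants span $6+9-3=12$ dimensions, i.e.\ $6>5$ modulo constants. This contradicts your bound $\dim H^4_{T^3}(\tilde{\Gamma})\le 11$ for a free module, so your fallback routes are not needed.

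Your route differs from the paper's at each step.

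\textbf{Degree-two vanishing.} The paper propagates congruences around the $4$-cycles $1\,2\,5\,6$ and $1\,2\,3\,6$, rather than using your colour-by-colour decomposition.

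\textbf{Non-surjectivity.} The paper argues that a surjective $p_*$ would give a short exact sequence $0\to H^*_{T^3}(\tilde{\Gamma})\to H^*_{T^3}(\tilde{\Gamma})\to H^*_{T^2}(\Gamma)\to 0$. This would identify $H^*_{T^3}(\tilde{\Gamma})/H^+(BT^3)\cdot H^*_{T^3}(\tilde{\Gamma})$ with $H^*(\mathrm{SU}(3)/T^2)$, which is impossible in degree $2$. Your count $\dim p_*(H^2_{T^3}(\tilde{\Gamma}))=2<4=\dim H^2_{T^2}(\Gamma)$ uses the same information more directly.

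\textbf{Non-freeness.} The paper does not count dimensions. It observes that in a free module with no degree-$2$ generators, the four Thom classes of the cycle $1\,2\,3\,6$, being independent modulo $H^4(BT^3)\cdot 1$, would have to be part of a basis. It then exhibits the syzygy $y\,\mathrm{Th}_{12}-z\,\mathrm{Th}_{23}+y\,\mathrm{Th}_{36}-x\,\mathrm{Th}_{16}=0$. You instead use the rank $6$ coming from localization and show that at least $7$ generators are needed. Your approach is mechanical and requires no guessing of a relation. The paper's explicit syzygy is quicker to check and shows concretely where freeness fails.
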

	\begin{proof}
		We enumerate the vertices of $\Gamma$ (respectively $\tilde{\Gamma}$) clockwise, as in the figure above.   Let us first show that $H^2_{T^3}(\tilde{\Gamma})=H^2(BT^3)\cdot 1$. To this end, let $\omega\in H^2_{T^3}(\tilde{\Gamma})$ be arbitrary; by subtracting an appropriate element in $H^2(BT^3)$ we may assume that $\omega$ vanishes at vertex $1$. Considering now the congruence relations along the path $1-2-5-6-1$, it follows that $\omega$ vanishes also at vertex $2$. Following the path $1-2-3-6-1$, we see that $\omega$ vanishes also at vertex $6$. Continuing in the same way, it follows that $\omega=0$.
		
		This already implies that $p_*$ cannot be surjective: if it was, we had a short exact sequence
		\[
		0 \longrightarrow H^*_{T^3}(\tilde{\Gamma})\longrightarrow H^*_{T^3}(\tilde{\Gamma})\overset{p_*}\longrightarrow H^*_{T^2}(\Gamma)\longrightarrow 0,
		\]
		implying that the nonequivariant graph cohomology of $\tilde{\Gamma}$ was equal to
		\[
		H^*_{T^3}(\tilde{\Gamma})/H^+(BT^3)\cdot H^*_{T^3}(\tilde{\Gamma}) = H^*_{T^2}(\Gamma)/H^+(BT^2)\cdot H^*_{T^2}(\Gamma) = H^*_{T^2}({\mathrm{SU}}(3)/T^2) = H^*({\mathrm{SU}}(3)/T^2).
		\]
		But we just observed that this quotient vanishes in degree $2$, while the cohomology of the flag manifold $H^*({\mathrm{SU}}(3)/T^2)$ is nonzero in degree $2$.
		
		It remains to show that $H^*_{T^3}(\tilde{\Gamma})$ is not a free module over $H^*(BT^3)$. We just computed that in degree $2$ there do not occur any new module generators. Assuming that $H^*_{T^3}(\tilde{\Gamma})$ was free, it would follow that any set of elements of $H^4_{T^3}(\tilde{\Gamma})$ that induces a $\Q$-basis of $H^4_{T^3}(\tilde{\Gamma})/H^4(BT^3)\cdot 1$ is also $H^*(BT^3)$-linearly independent. But consider for instance the four Thom classes of the edges~\cite[Def.~2.15]{go-ko-zo-22'} forming the quadrangle $1-2-3-6-1$: e.g., ${\mathrm{Th}}_{12}$ is (with the appropriate sign chosen) the element in $H^4_{T^3}(\tilde{\Gamma})$ which equals $xz$ at vertices $1$ and $2$, and is zero at all other vertices of the graph.
		These four elements are $\Q$-linearly independent in $H^4_{T^3}(\tilde{\Gamma})$, however they satisfy the following relation
		\[
		y\cdot {\mathrm{Th}}_{12} - z \cdot {\mathrm{Th}}_{23} + y \cdot {\mathrm{Th}}_{36} - x \cdot {\mathrm{Th}}_{16} = 0.
		\]
	\end{proof}
	
	\begin{rem}
		The signed GKM graph of the flag manifold $SU(3)/T^{2}$ does not admit an extension to a signed GKM graph (e.g., see~\cite[Example~2.6, Corollary~2.12]{so-23}).
		There is no contradiction with the existence of an extension to an unsigned torus graph, since the respective axial function groups (whose ranks are complete obstructions to GKM graph extendibility~\cite{ku-19, go-so-25}) are not isomorphic, in particular, because they are defined for different connections.
	\end{rem}
	
	\section{Proof of Theorem~\ref{thmM:quot}}\label{sec:thm2pf}
	
	In this section we prove Theorem~\ref{thmM:quot}. 
	
	\subsection{$T$-graphs}	
	\begin{defn}[{\cite{ma-ma-pa-07}}]\label{defn:abstractunsignedgraph}\label{def:tgraph}
		A $T$-graph is a tuple $\Gamma=(V,E,\nabla,\beta)$ consisting of an abstract graph $(V,E)$ as above, a connection $\nabla$ on $(V,E)$ and an \emph{axial function} $\beta\colon E\to \Z^{n}$, where $n$ is the valence of the graph, that satisfy:
		\begin{enumerate}[label=(\roman*)]
			\item For each $e\in E$, $\beta(\overline{e}) = -\delta(e)\cdot\beta(e)$, $\delta(e)\in \lb\pm 1\rb$;
			\item For each $v\in V$ the set $\lb \beta(e)\mid \in\str v\rb$ is a basis of the whole $\Z^{n}$;
			\item The axial function $\beta$ satisfies the \emph{congruence relation} 
			\[
			\beta(\nabla_e e') = \beta(e') + d_{e,e'} \cdot \beta(e).
			\]
		\end{enumerate}
	\end{defn}
	
	Clearly, any $T$-graph structure on a graph $\Gamma$ induces the structure of unsigned torus graph on $\Gamma$, by considering the axial function modulo signs. In the next section we show that conversely, the axial function of any unsigned torus graph lifts to that of a $T$-graph.
	
	\subsection{Lifts to $T$-graphs}	
	
	Recall that a ranked poset $P$ is called \textit{simplicial} if $P_{\leq p}=\lb q\in P\colon q\leq p\rb$ is the lattice of faces in a simplex of dimension $\rk p$.
	A \textit{characteristic function} on a simplicial poset $P$ of dimension $n$ is a function $\lambda\colon P_{0}\to\Z^{n}$ (where $P_{0}$ denotes the subset of rank $0$ elements in $P$) such that for every $p\in P$ the values of $\lambda$ on $(P_{\leq p})_{0}$ form a basis of split summand in $\Z^{n}$ of rank $\rk p+1$.
	
	For any unsigned torus graph $\Gamma$, the opposite poset $S(\Gamma)^{op}$ to $S(\Gamma)$ is simplicial.	
	
	\begin{pr}\label{pr:torchar}
		For a fixed connection $\nabla$ on an $n$-valent graph $\Gamma = (V,E)$ with face poset $S$, there is a bijection between the following sets:
		\begin{enumerate}[label=(\roman*)]
			\item functions $\mathcal{F}(\Gamma)\to\Z^{n}$
			from the set of facets $\mathcal{F}(\Gamma)$ in $\Gamma$ such that for every vertex $v\in V$ of $\Gamma$, the collection $\lb\lambda(F)\mid  F\in \mathcal{F}(\Gamma),\ v\in F\rb$ is a basis of the whole $\Z^{n}$ (this condition is called unimodularity of the function);
			\item characteristic functions on $S^{op}$;
			\item $T$-graphs with face poset $S$.
		\end{enumerate}
	\end{pr}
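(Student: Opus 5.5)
The bijection (i)$\leftrightarrow$(ii) is essentially bookkeeping, and (i)$\leftrightarrow$(iii) carries the content. For (i)$\leftrightarrow$(ii), note that the rank $0$ elements of $S^{op}$ are exactly the facets of $\Gamma$, so a characteristic function is a function $\lambda\colon\mathcal{F}(\Gamma)\to\Z^n$. The maximal elements of $S^{op}$ are the vertices $v$, with $\rk v = n-1$. For a vertex $v$, the facets containing $v$ are in bijection with the edges $e\in\str v$: to $e$ corresponds the unique facet through $v$ spanned by the other $n-1$ edges. The characteristic condition at the top element $v$ is exactly unimodularity at $v$. Any $p\in S^{op}$ lies below some vertex, and a subset of a basis of $\Z^n$ is a basis of a split summand. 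Hence the conditions at non-maximal $p$ follow from those at vertices, and (i) and (ii) coincide.

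For (i)$\to$(iii), given $\lambda$ and a vertex $v$, I would define $\beta(e)$ for $e\in\str v$ by duality: let $\{\beta(e)\}_{e\in\str v}$ be the basis of $\Z^n$ (identified with its dual via the standard pairing, or with $H^2(BT)$ if $\lambda$ is read in $\mathfrak t_\Z$) dual to $\{\lambda(F_{e'})\}_{e'\in\str v}$. Here $F_{e'}$ is the facet through $v$ not containing $e'$, and duality means $\la\beta(e),\lambda(F_{e'})\ra=\delta_{ee'}$. Condition (ii) of Definition~\ref{def:tgraph} then holds by construction.

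Next I would check the congruence relation. Take $e\colon v\to w$. For $e'\in\str v$ with $e'\neq e$, the facet $F_{e'}$ contains $e$, and by connection invariance it is the facet at $w$ omitting $\nabla_e e'$. The facet $F_e$ at $v$ (omitting $e$) and the facet at $w$ omitting $\bar e$ need not coincide; this is the one place the two dual bases differ. Therefore $\beta(\nabla_e e')$ and $\beta(e')$ pair identically with all $\lambda(F)$ for the $n-1$ common facets, namely those containing $e$. Their difference is annihilated by these facets, and so it lies in the rank one direct summand spanned by $\beta(e)$. This gives $\beta(\nabla_e e')=\beta(e')+d_{e,e'}\beta(e)$ with $d_{e,e'}\in\Z$.

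For $\bar e$, a similar argument applies. Both $\beta(\bar e)$ and $\beta(e)$ are annihilated by the $n-1$ common facets, and both are primitive. Hence $\beta(\bar e)=\pm\beta(e)$, which gives condition (i) with some $\delta(e)$.

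For (iii)$\to$(i), I would reverse the construction. Given a $T$-graph, define $\lambda(F)$ at a vertex $v\in F$ as the element of the dual basis to $\{\beta(e)\}_{e\in\str v}$ that vanishes on the $\beta(e)$ tangent to $F$ and pairs to $1$ with the $\beta$ of the unique edge at $v$ transverse to $F$.

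The main obstacle is well-definedness, i.e.\ independence of $\lambda(F)$ from the vertex $v\in F$. I would move along an edge $e\subset F$ from $v$ to $w$, which suffices since $F$ is connected. The congruence relation shows that the tangent span $\spn\{\beta(e'')\mid e''\subset F\}$ is the same at $v$ and at $w$, so $\lambda(F)$ is determined up to sign by being primitive and annihilating that span. The sign is fixed by the transverse edge $e'$: with $e'$ transverse at $v$, the transverse edge at $w$ is $\nabla_e e'$, and $\beta(\nabla_e e')\equiv\beta(e')$ modulo $\beta(e)\in\ker\lambda(F)$. Hence both pair to $1$, and the sign is consistent.

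This is exactly where the signed congruence in Definition~\ref{def:tgraph}(iii), with coefficient $+1$ on $\beta(e')$ and only the reversal $\beta(\bar e)$ allowed a sign $\delta$, is needed. I expect that checking the two constructions are mutually inverse is then immediate from the duality.
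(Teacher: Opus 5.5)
Your proposal is correct and follows essentially the same route as the paper: (i)$\leftrightarrow$(ii) is bookkeeping, and (i)$\leftrightarrow$(iii) goes through dual bases at each vertex. The key point in both is that along an edge $e$ the $n-1$ facets containing $e$ are shared by both endpoints. Hence the congruence relation makes $\lambda(F)$ vertex-independent, and conversely the dual $\beta$ satisfies conditions (i) and (iii) of Definition~\ref{def:tgraph}. Your annihilator-plus-sign argument is a repackaging of the paper's direct comparison of the dual bases $\lambda_{i(e)}(e_j)=\lambda_{t(e)}(f_j)$, $j\geq 2$.
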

	\begin{proof}
		It is straightforward to establish the bijection between $(i)$ and $(ii)$.
		Let $\Gamma$ be any $T$-graph with the connection $\nabla$ and the underlying graph $(V,E)$.
		For every vertex $v\in V$, let $\lb\lambda_{v}(e)\mid e\in \str v\rb$ be the basis dual to $\lb\beta(e)\mid e\in \str v\rb$. We wish to show that for any facet $F$, the value $\lambda_v(e)$, where $v\in V_F$ and $e$ is the unique edge transversal to $F$ at $v$, does not depend on $v$. Having achieved this, we may define $\lambda(F):=\lambda_{v}(e)$. Unimodularity of this function is then obvious, as at a vertex $v$ it is given by the dual basis of a basis of $\Z^n$.
		
		To show the claim, we consider any edge $e\in E$ and denote by $e_1=e,e_2,\ldots,e_n$ the edges at $i(e)$ and by $f_1:=\overline{e}$ and $f_j=:\nabla_e e_j$, $j=2,\ldots,n$ the edges at $t(e)$. We need to show that $\lambda_{i(e)}(e_j) = \lambda_{t(e)}(f_j)$ for $j=2,\ldots,n$. But this follows directly from the congruence relation $\beta(f_j) \equiv \beta(e_j) \mod \beta(e_1)$.
		
		Conversely, let $\lambda$ be a characteristic function on $S^{op}=S(\Gamma)^{op}$; we use again the notation $\lambda_v(e) = \lambda(F)$ as above. For $v\in V$ we define $\{\beta(e)\mid e\in \str v\}$ to be the dual basis of $\{\lambda_v(e)\}$ and show that $\beta$ defines an axial function of a $T$-graph structure on $\Gamma$. We need to check conditions $(i)$ and $(iii)$ from Definition~\ref{def:tgraph}. To this end, fix any (oriented) edge $e$, call $e_1:=e$ and $e_2,\ldots,e_n$ the remaining edges at $i(e)$. Further, let $f_1:=\overline{e}$, and $f_j:=\nabla_e e_j$, $j=2,\ldots,n$. By construction, $\lambda_{i(e)}(e_j) = \lambda_{t(e)}(f_j)$ for $j=2,\ldots,n$, and hence $\lambda_{t(e)}(f_1) = \pm \lambda_{i(e)}(e_1) + \sum_{j=2}^n a_j \lambda_{i(e)}(e_j)$ for some $a_j\in \Z$. Comparing the dual bases of $\{\lambda_{i(e)}(e_j)\}$ and $\{\lambda_{t(e)}(f_j)\}$, we arrive immediately at conditions $(i)$ and $(iii)$. The proof is complete.
	\end{proof}
	
	\begin{pr}\label{pr:lifttor}
		Any unsigned torus graph admits a lift to a $T$-graph.
	\end{pr}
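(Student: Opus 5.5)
Via Proposition~\ref{pr:torchar}, it is enough to produce a unimodular function on facets $\lambda\colon\mathcal{F}(\Gamma)\to\Z^n$ for the given connection $\nabla$ of the unsigned torus graph $\Gamma$. The corresponding $T$-graph axial function $\beta$ is then automatically a lift, provided we can arrange $\beta(e)=\pm\alpha(e)$ for every edge $e$. The plan is therefore to build $\lambda$ by dualizing the unsigned axial function $\alpha$ vertex by vertex, and then check that the choices are consistent along edges.

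Fix a vertex $v$ and choose arbitrary sign lifts $\tilde\alpha(e)\in\Z^n$ of $\alpha(e)$ for $e\in\str v$. Since $\Gamma$ is an unsigned torus graph, these form a basis of $\Z^n$. This is implicit in effectivity plus valence $n$ only in rank, so I will use the standard torus-graph nondegeneracy that the $n$ weights at a vertex form a $\Z$-basis; I assume this is part of the definition of torus graph used here. Let $\{\lambda_v(e)\}$ be the dual basis. Changing the sign of $\tilde\alpha(e)$ only changes the sign of $\lambda_v(e)$, so the unsigned vectors $\pm\lambda_v(e)\in\Z^n/\{\pm1\}$ are well defined.

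The key step is to show that, for a facet $F$ and an edge $e$ of $F$ with transversal edges $e_j$ at $i(e)$ and $f_j=\nabla_e e_j$ at $t(e)$, we have $\lambda_{i(e)}(e_j)=\pm\lambda_{t(e)}(f_j)$ for $j\geq2$. As in the proof of Proposition~\ref{pr:torchar}, this follows from the unsigned congruence \eqref{eq:gkmcomp1}: $\tilde\alpha(f_j)\in\eps_j\tilde\alpha(e_j)+\Z\alpha(e)$ and $\tilde\alpha(f_1)=\pm\tilde\alpha(e_1)$. Hence the bases at $i(e)$ and $t(e)$ differ by a matrix that is triangular with $\pm1$ on the diagonal relative to the first vector. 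Its inverse transpose therefore fixes the dual vectors $\lambda(e_j)$, $j\geq 2$, up to sign. By connectivity of the facet $F$, the unsigned vector $\pm\lambda(F)$ is thus independent of the vertex $v\in F$.

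Finally, for each facet $F$ we choose a sign representative $\lambda(F)\in\Z^n$ of this unsigned vector. Unimodularity at each vertex holds because, up to signs, the values of $\lambda$ there form the dual basis, and sign changes preserve being a basis. Proposition~\ref{pr:torchar} then gives a $T$-graph with connection $\nabla$. Its axial function at $v$ is the dual basis of $\{\lambda(F)\}$, which equals $\{\tilde\alpha(e)\}$ up to individual signs, so it lifts $\alpha$. The main obstacle I expect is the edge-consistency step: one must carefully verify that the sign ambiguities $\eps_j$ in \eqref{eq:gkmcomp1} affect the dual vectors only by signs and do not mix them. The triangular-matrix computation above is what I would write out explicitly.
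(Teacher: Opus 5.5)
Your proposal is correct and follows the paper's proof essentially step for step: lift $\alpha$ with arbitrary signs, dualize at each vertex, note that the dual vector transversal to a facet is well defined up to sign, choose a sign for each facet, invoke Proposition~\ref{pr:torchar}, and observe that dualizing back gives $\pm\tilde\alpha$, so the result lifts $\alpha$. Your triangular-matrix computation (giving $\lambda_{t(e)}(f_j)=\eps_j\lambda_{i(e)}(e_j)$ for $j\geq 2$) just writes out the step the paper only asserts as ``$\lambda(F)$ is defined canonically up to sign''. Your explicit assumption that the weights at each vertex form a $\Z$-basis is also used implicitly by the paper, and it is necessary because of condition (ii) of Definition~\ref{def:tgraph}.
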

	\begin{proof}
		We construct a characteristic function on $S(\Gamma)^{op}$ as follows.
		Lift the axial function $\alpha$ of $\Gamma$ arbitrarily to $\tilde\alpha\colon E \to \Z^{n}$ satisfying $\pi\circ \tilde\alpha = \alpha$.
		For any $v\in V$, let $\{\lambda_{v}(e)\mid e\in \str v\}$ be the dual basis to the $\{\tilde\alpha(e)\mid e\in \str v\}$. Now define 
		\[
		\lambda(F):=\lambda_{v}(e),
		\]
		where for any facet $F$ we chose arbitrarily a vertex $v\in V_{F}$, and $e$ is the unique transversal edge to $F$ at $v$.
		This choice is non-canonical; but note that $\lambda(F)$ is defined canonically up to sign.
		
		Clearly, the unimodularity condition does not depend on the sign choice that we made, and holds true as at every vertex we took the dual to a basis of $\Z^n$. By Proposition~\ref{pr:torchar}, this characteristic function induces a $T$-graph structure on $\Gamma$, which lifts the original structure of GKM graph, as the operation of taking the dual basis is self-inverse.
	\end{proof}
	
	\subsection{Thom classes and the proof}
	
	Let $\widetilde{\Gamma}$ be any $T$-graph with the axial function $\beta$ taking values in $H^{2}(BT;\Z)$.
	The \textit{face ring} of the simplicial poset $S(\widetilde{\Gamma})$ is defined as the quotient of the polynomial ring by the ideal of \textit{straightening relations} (see~\cite{ma-pa-06})
	\[
	\Bbbk[S(\widetilde{\Gamma})]:=\Bbbk[\tau_{F}\mid F\in S(\widetilde{\Gamma})]/(\tau_{F}\cdot\tau_{G}=\tau_{F\vee G}\cdot\sum_{H\in F\wedge G}\tau_{H},\ \tau_\Gamma=1),\ \deg \tau_{F}=2\cdot\rk F,
	\]
	where $F\wedge G$ denotes the collection of connected components for the graph $F\cap G$, and $F\vee G\in S(\widetilde{\Gamma})$ is the least face containing both $F$ and $G$.
	There is the morphism
	\begin{equation}\label{eq:facemap}
		\Bbbk[S(\widetilde{\Gamma})]\to H^{*}_{T}(\widetilde{\Gamma}),\ 
		\tau_{F}\mapsto \biggl[v\mapsto 
		\begin{cases}
			\prod\limits_{i(e)=v,\ e\notin F}{\beta(e)},\ v\in F,\\
			0,\ \mbox{otherwise}.
		\end{cases}\biggr],\ 
        F\in S(\widetilde{\Gamma}).
	\end{equation}
	
	Given an unsigned GKM graph of complexity $0$, one defines its Thom classes as those for a lift to $T$-graphs (by Proposition~\ref{pr:lifttor}).
	Notice that the homomorphism \eqref{eq:facemap} (as well as its domain and codomain) depends only on the unsigned GKM graph of $\widetilde{\Gamma}$.
	
	\begin{thm}[{\cite[Theorem~5.5]{ma-ma-pa-07}}]\label{thm:mp}
		For any $T$-graph $\widetilde{\Gamma}$ the morphism \eqref{eq:facemap} is an isomorphism of algebras over $\Bbbk=\Q,\Z$.
	\end{thm}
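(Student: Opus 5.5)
The plan is to prove Theorem~\ref{thm:mp} in the way of~\cite{ma-pa-06,ma-ma-pa-07}, via Thom classes and induction on faces. Write $\Phi$ for the morphism~\eqref{eq:facemap}.

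\textbf{Well-definedness.} First check that each image $\Phi(\tau_F)$ lies in $H^*_T(\widetilde{\Gamma})$. For an edge $e$ inside $F$, the connection transports the transversal edges of $F$ at $i(e)$ to those at $t(e)$. The congruence relation~(iii) of Definition~\ref{def:tgraph}, together with~(i), then shows that the two products agree modulo $\beta(e)$ up to the appropriate sign. Since the sign $\delta$ enters, one should fix signs of the Thom classes through the characteristic function of Proposition~\ref{pr:torchar}: the product $\prod\beta(e)$ becomes the product of the dual basis vectors attached to the facets containing $F$. This makes the sign independent of $v$. For an edge $e$ leaving $F$, the value at $i(e)$ contains the factor $\beta(e)$ and the value at $t(e)$ is $0$, so the condition holds.

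\textbf{Relations.} Next verify the straightening relations pointwise at each vertex $v$. At $v$ the faces through $v$ correspond to subsets of $\str v$, and $F\cap G$ near $v$ is the unique component $H$ containing $v$. So the relation reduces to $\prod_{A}\prod_{B}=\prod_{A\cup B}\prod_{A\cap B}$ for the sets $A,B$ of transversal edges, which is true. At vertices outside $F\cap G$ both sides vanish. Hence $\Phi$ is a well-defined algebra morphism.

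\textbf{Surjectivity.} Given $f\in H^*_T(\widetilde{\Gamma})$, choose a vertex $v$ where $f$ is nonzero and which is minimal with respect to a generic ordering (a Morse-type function built from a generic linear form, as in~\cite{ma-ma-pa-07}). Alternatively, induct on the support. Around such a vertex, the GKM conditions together with the basis property~(ii) imply that $f(v)$ is divisible by $\prod_{e\in\str v\setminus E_F}\beta(e)$ for the face $F$ spanned by the edges along which $f$ does not vanish. Since $\{\beta(e)\}$ is a basis of $\Z^n$, $f(v)$ is a polynomial in the $\beta(e)$, $e\in\str v$. Expanding in monomials, one subtracts $\Phi$ of monomials $\prod\tau_{G}$ (with $G$ faces through $v$) to kill $f(v)$, and shrinks the support.

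The cleaner route is the face-by-face induction of~\cite{ma-pa-06}. Restrict $f$ to a facet $F$ and apply induction to the $T$-graph structure on $F$, where $\beta$ modulo $\lambda(F)$ yields a $T/T_F$-type graph. Lift the resulting class to $\widetilde{\Gamma}$ using Thom classes, multiplied by $\tau_F$. Finally use that a class vanishing on all facets through $v$ is divisible by the corresponding product.

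\textbf{Injectivity.} Use the standard basis of the face ring. Every element is a $\Bbbk$-combination of monomials $\tau_{F_1}^{a_1}\cdots$ with $F_1<\dots<F_k$ a chain, and after straightening it is written as $\tau_G\cdot(\text{monomial in }\tau_{\text{facets}\supseteq G})$. Then localize: the map
\[
\Bbbk[S(\widetilde{\Gamma})]\to\bigoplus_{v\in V}\Bbbk[\beta(e)\mid e\in \str v]
\]
restricted to the star of $v$ (the faces through $v$) is the isomorphism of the face ring of a simplex with the polynomial ring. Since every face contains a vertex, an element killed at all vertices has all its terms killed, using the standard fact that the face ring of a simplicial poset embeds into the product of its restrictions to maximal faces.

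\textbf{Main obstacle.} I expect the main difficulty in the surjectivity step over $\Z$. The divisibility argument needs the relevant $\beta(e)$ to be pairwise coprime and to form a basis, which is exactly condition~(ii), and it needs a well-founded induction over faces with careful signs. I would follow the Morse-ordering argument of~\cite[Section~5]{ma-ma-pa-07} verbatim.
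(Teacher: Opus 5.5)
The paper gives no proof of this statement; it quotes \cite[Theorem~5.5]{ma-ma-pa-07}, so your sketch has to stand on its own. Well-definedness, the straightening relations and injectivity are fine. The sign $\delta$ never enters, since (iii) gives $\beta(\nabla_e e')\equiv\beta(e')$ modulo $(\beta(e))=(\beta(\overline{e}))$. An edge transversal to $F$ may join two vertices of $F$, but this is harmless because $\beta(e)$ divides both values. The genuine gap is surjectivity, where neither of your routes works as written. Take the $T^2$-graph of $S^4$: vertices $N,S$, edges $e_1,e_2$ with $\beta(e_i)=\beta(\overline{e}_i)=t_i$, facets $F_1,F_2$ (the two edges), and $F_1\cap F_2=\lb N,S\rb$.
\begin{itemize}
\item Since $\beta(e_i)(\xi)=\beta(\overline{e}_i)(\xi)$ for every $\xi$, no generic linear form induces an acyclic orientation. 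So there is no Morse-type ordering for general $T$-graphs, and you cannot follow such an argument verbatim.
\item Subtracting $\Phi$ of facet monomials does not shrink supports. For $f=\Phi(\tau_N)=(t_1t_2,0)$, the natural lift $\tau_{F_1}\tau_{F_2}$ of $f(N)$ has $\Phi(\tau_{F_1}\tau_{F_2})=(t_1t_2,t_1t_2)$, and the difference is supported at $S$. Your divisibility remark only uses vanishing at neighbours, which does not control higher faces.
\item The facet route fails as well. The map $H^*_T(\widetilde{\Gamma})\to H^*_T(F_1)$ is not surjective: $(t_1,0)$ is not a restriction. So a face-ring preimage of $f|_F$ on $F$ need not lift to $\widetilde{\Gamma}$, and showing that it does is as hard as the theorem. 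Multiplying by $\tau_F$ only multiplies the restriction to $F$ by $\Phi(\tau_F)|_F$.
\end{itemize}

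The missing idea is compatibility along all faces, not just edges. For a face $H$ let $\mathfrak{t}_H^*$ be the span of the labels of $H$; it equals $\spn\la\beta(e)\mid e\in\str v\cap E_H\ra$ for every $v\in V(H)$. For $f\in H^*_T(\widetilde{\Gamma})$ and $u,v\in V(H)$, connectedness of $H$ and the edge conditions give $f(u)\equiv f(v)$ modulo the ideal $(\mathfrak{t}_H^*)$. For every facet $G\supseteq H$, let $e_G(u)$, $e_G(v)$ be the edges at $u$, $v$ not in $G$. Since $\nabla$ along edges of $H\subseteq G$ carries one to the other, (iii) gives $\beta(e_G(u))\equiv\beta(e_G(v))$ mod $\mathfrak{t}_H^*$. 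Consequently there is a unique $p_H\in\Bbbk[\tau_G\mid G\supseteq H\text{ a facet}]$, independent of $v\in V(H)$, with the following property: $\Phi(\tau_H p_H)(v)$ is the part of $f(v)$ made of monomials in $\lb\beta(e)\rb_{e\in\str v}$ whose support is exactly the set of edges at $v$ transversal to $H$. Summing over all faces gives $\Phi\bigl(\sum_H\tau_Hp_H\bigr)=f$. This is just the inverse of the embedding you use for injectivity, i.e.\ the face ring viewed as piecewise polynomials. It also repairs your support induction. Subtracting $\Phi(\sum_{H\ni v}\tau_Hp_H)$ kills $f(v)$ and changes no value at a zero $u$ of $f$. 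Indeed, $f(v)\in(\mathfrak{t}_H^*)$ forces $p_H=0$ whenever $H$ contains such a $u$.
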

	
	\begin{thm}[{\cite{go-so-25}}]\label{thm:goso}
		Let $\Gamma_{K}$ be the GKM graph of any GKM$_{4}$ $K$-manifold $M$.
		Suppose that the $K$-action on $M$ is either Hamiltonian, or has complexity $1$.
		Then $\Gamma_{K}$ extends to a torus graph $\Gamma_{T}$ by an epimorphism $p\colon H^{*}(BT)\to H^{*}(BK)$.
	\end{thm}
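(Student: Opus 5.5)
The plan is to build the torus graph $\Gamma_T$ by adding to the axial function $\alpha_K$ of $\Gamma_K$ new integer-valued coordinates compatible with the fixed connection $\nabla$. This is Kuroki's axial function group approach \cite{ku-19}. Call $\Gamma_K$ $n$-valent, with $n=\tfrac12\dim M$, and write $k=\dim K$.

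\textbf{The axial function group.} Let $\mathcal A(\Gamma_K)$ be the abelian group of functions $c\colon E\to\Z$ satisfying the following three conditions.
\begin{enumerate}[label=(\roman*)]
\item $c(\overline e)=-c(e)$, after choosing a lift of $\alpha_K$ to a signed function that is compatible with $\nabla$ edge by edge.
\item $c(\nabla_e e')=\pm c(e')+m_{e,e'}c(e)$, with the same sign and the same integer $m_{e,e'}$ that occur in
\[
\tilde\alpha_K(\nabla_e e')=\pm\tilde\alpha_K(e')+m_{e,e'}\tilde\alpha_K(e).
\]
\item The coefficients $m_{e,e'}$ are well defined. This holds because $\alpha_K(e)$ and $\alpha_K(e')$ are linearly independent, by the GKM$_2$ condition.
\end{enumerate}
The coordinate functions of $\alpha_K$ lie in $\mathcal A(\Gamma_K)$. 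An extension to a torus graph amounts to finding $n-k$ further elements $c_1,\dots,c_{n-k}\in\mathcal A(\Gamma_K)$ such that, at every vertex $v$, the vectors $(\alpha_K(e),c_1(e),\dots,c_{n-k}(e))$ for $e\in\str v$ form a basis of $\Z^n$. The required epimorphism $p$ is then the projection that forgets the new coordinates.

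\textbf{Propagation and its obstruction.} The values of $c$ on $\str v$ at a single vertex determine $c$ along connection paths, via rule (ii). Hence $c$ is determined by its values at one base vertex, and the only obstruction is monodromy around closed edge loops. By Proposition~\ref{pr:homcell}(i), faces of the orbit space are acyclic, and by Lemma~\ref{lm:gmpos}, GKM$_4$ gives $S_3(M)=S_3(\Gamma_K)$. From these I would show that the regular CW complex of Lemma~\ref{lm:abspos} is simply connected, so the loops are generated by the boundaries of $2$-faces. For a $2$-face $F$, the corresponding face submanifold $M_F$ is a $4$-dimensional GKM manifold under $T/T_F$ with $\dim T_F^{\perp}=2$, i.e.\ locally toric. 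On the closed connection path bounding a $2$-face the monodromy of rule (ii) is trivial, because the weights are realized by an effective $2$-torus action there. Consistency among different $2$-faces meeting along a $3$-face is needed so that the monodromy condition is local. This uses the $3$-faces, which by GKM$_4$ are again geometric GKM$_3$ submanifolds; their $2$-skeleta are again simply connected, by Proposition~\ref{pr:homcell}. The upshot is $\mathcal A(\Gamma_K)\cong$ the space of admissible values at one vertex. This is a lattice whose rank is computable from the vertex data.

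\textbf{Main obstacle: rank and unimodularity.} The hard step is to show that $\mathcal A(\Gamma_K)$ has rank $n$ and that a basis can be chosen to be unimodular at every vertex. Here the two hypotheses enter differently.
\begin{itemize}
\item In \emph{complexity one}, at each vertex the $n$ weights satisfy a single primitive linear relation. I would try to define $c_1$ as the coordinate dual to this relation and check that the relations are transported consistently along edges by $\nabla$. Integrality at $\Z$ would follow from effectivity together with connected stabilizers on $2$-faces.
\item In the \emph{Hamiltonian} case, I would first treat the case of codimension one subtori of a larger torus, using the moment map image. There the $2$-faces are planar polygons in ${\mathfrak t}^*$ and their edges are segments. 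I would produce new coordinates from a generic height function, and check unimodularity at each vertex using the local toric normal form and the GKM$_4$ independence of triples and quadruples of weights.
\end{itemize}
If one of these direct constructions fails, I would instead compute $\rk\mathcal A(\Gamma_K)$ via the $3$-face reduction above and use a determinant argument vertex by vertex.
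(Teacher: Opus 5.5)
The paper gives no proof of this statement; it is imported from the cited work of Goertsches--Solomadin. So I judge your plan on its own terms.

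\textbf{Where the difficulty actually lies.} Kuroki's group $\mathcal A(\Gamma_K)$ with parallel transport from a base vertex is a sound framework, but you have put the difficulty in the wrong place. Suppose the transport has trivial holonomy along every closed edge path. Then evaluation at each vertex $v$ is an isomorphism $\mathcal A(\Gamma_K)\to\Z^{\str v}$. Hence any $\Z$-basis $c_1,\dots,c_n$ of $\mathcal A(\Gamma_K)$ automatically gives weights $(c_1(e),\dots,c_n(e))$ that form a $\Z$-basis at every vertex. Writing the coordinate functions of $\alpha_K$ in terms of the $c_i$ defines $p$, which is onto by effectivity of $\Gamma_K$. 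Conversely, a torus graph extension supplies $n$ parallel elements spanning every $\Z^{\str v}$, so the holonomy is trivial. Thus rank and unimodularity are not the obstacle. The theorem is equivalent to triviality of the holonomy, and both sentences in which you settle that have gaps.

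\textbf{The $2$-face step.} Your justification is wrong. The effective $T^2$-action on $M_F$ only controls the block of the holonomy on the two edges of $F$. The rows of the normal edges involve the normal weights, which $M_F$ does not see, and the connection may even permute normal edges. For example, take $\mathrm{Gr}_2(\mathbb{C}^4)$ with its effective $T^3$-action; it is Hamiltonian, of complexity one, and GKM$_3$, but not GKM$_4$. Label the vertices by coordinate planes $ij$ and label the edge from $\{i,l\}$ to $\{j,l\}$ by $x_j-x_i$. Let $(a,b,u,w)$ be the values of $c$ on the edges from $12$ to $13,14,23,24$. Around the triangular $2$-face with vertices $12,13,14$, the transport is $(a,b,u,w)\mapsto(a,b,w+a-b,u-a+b)$, and the connection swaps the edges towards $23$ and $24$. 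So $\mathcal A$ has rank $3$ and there is no extension. What actually kills the $2$-face holonomy is GKM$_4$. Let $e_1,e_2$ be the edges of $F$ at $v$. If going around $F$ carried a normal edge $e'$ at $v$ to $e''\neq e'$, transporting the coordinate functions of $\alpha_K$ (which lie in $\mathcal A$) would give $\alpha_K(e'')\in\pm\alpha_K(e')+\spn\la\alpha_K(e_1),\alpha_K(e_2)\ra$, contradicting GKM$_4$. Once every normal edge returns to itself, GKM$_3$ forces the sign $+$ and zero correction terms. Your remark about consistency along $3$-faces does not supply this.

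\textbf{The global step.} This is the real gap. Simple connectivity of the complex of Lemma~\ref{lm:abspos} does not follow from Proposition~\ref{pr:homcell}(i) and Lemma~\ref{lm:gmpos}. Acyclicity of faces, or exactness of the ABFP sequence in polynomial degree $0$ (which gives $H^1=0$ for that complex), is homological information and does not control $\pi_1$. This is exactly where the Hamiltonian or complexity-one hypothesis must enter, and your plan never uses either one there. You need the holonomy representation of $\pi_1$ of the $2$-skeleton to be trivial. In the Hamiltonian case, one route is to use a generic component of the moment map to push closed edge paths downwards across $2$-faces. In complexity one, the holonomy fixes the rank-$k$ span of the coordinate functions of $\alpha_K$ at $v$, so it lies in a group of the form $\Z^{k}\rtimes\{\pm1\}$. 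Once the sign is controlled, it factors through the finite group $H_1$ of the $2$-skeleton and is therefore trivial.

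The constructions in your last section do not help. Whether a choice made at one vertex, such as a coordinate dual to the local relation, propagates consistently is exactly the holonomy question.
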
	

    \begin{proof}[Proof of Theorem~\ref{thmM:quot}]
	By Theorem~\ref{thm:goso} there exists an extension of $\Gamma=\Gamma_{K}$ to an unsigned torus graph $\Gamma_{T}$ with respect to some epimorphism $p$.
    Theorem~\ref{thmM:epi} implies that the induced morphism of graph equivariant cohomology is surjective with kernel spanned by $\ker p$.
    By Proposition~\ref{pr:lifttor}, let $\widetilde{\Gamma}_{T}$ be a $T$-graph lifting the unsigned torus graph $\Gamma_{T}$.
    Then (in particular, by Theorem~\ref{thm:mp}) we have a composition
    \[
    \begin{tikzcd}
    \Bbbk[S(\widetilde{\Gamma}_{T})]\arrow{r}{\cong} &
    H^{*}_{T}(\widetilde{\Gamma}_{T};\Bbbk)\arrow[equal]{r} &
    H^{*}_{T}(\Gamma_{T};\Bbbk)\arrow{r}{p_{*}} &
    H^{*}_{K}(\Gamma_{K};\Bbbk).
    \end{tikzcd}
    \]
    Notice that $S(\widetilde{\Gamma}_{T})=S(\Gamma_{T})=S(\Gamma_{K})$, since the connections of $\Gamma_{K},\Gamma_{T}$ are equal by the definition of a GKM graph extension.
    This completes the proof.
    \end{proof}

\end{document}